\documentclass[reqno,centertags,12pt]{amsart}
\usepackage{amsmath,amsthm,amscd,amssymb}
\usepackage{latexsym}
\usepackage{graphicx}
\usepackage{enumitem}
\usepackage[mathscr]{eucal}

\usepackage{hyperref}

\sloppy

\newcommand{\bbC}{{\mathbb{C}}}
\newcommand{\bbD}{{\mathbb{D}}}

\newcommand{\bbN}{{\mathbb{N}}}

\newcommand{\bbR}{{\mathbb{R}}}

\newcommand{\bbZ}{{\mathbb{Z}}}

\newcommand{\fre}{{\mathfrak{e}}}
\newcommand{\frf}{{\mathfrak{f}}}
\newcommand{\frg}{{\mathfrak{g}}}

\newcommand{\cz}{{\mathbf{z}}}

\newcommand{\al}{\alpha}
\newcommand{\be}{\beta}

\newcommand{\calC}{{\mathcal{C}}}

\newcommand{\calG}{{\mathcal G}}


\newcommand{\dott}{\,\cdot\,}

\newcommand{\lb}{\label}

\newcommand{\ol}{\overline}

\newcommand{\wti}{\widetilde  }

\newcommand{\dist}{\text{\rm{dist}}}

\newcommand{\bi}{\bibitem}

\newcommand{\beq}{\begin{equation}}
\newcommand{\eeq}{\end{equation}}
\newcommand{\ba}{\begin{align}}
\newcommand{\ea}{\end{align}}

\renewcommand{\Im}{\operatorname{Im}}

\newcommand{\bs}{\backslash}
\newcommand{\hatC}{\bbC\cup\{\infty\}}
\newcommand{\hatR}{\bbR\cup\{\infty\}}
\newcommand{\pd}{\partial}
\DeclareMathOperator{\PW}{\mathcal{PW}}
\newcommand{\sgn}{\mathrm{sgn}}
\newcommand{\eps}{\varepsilon}





%
%
\newcounter{smalllist}
\newenvironment{SL}{\begin{list}{{\rm\roman{smalllist})}}{%
\setlength{\topsep}{0mm}\setlength{\parsep}{0mm}\setlength{\itemsep}{0mm}%
\setlength{\labelwidth}{2em}\setlength{\leftmargin}{2em}\usecounter{smalllist}%
}}{\end{list}}

%
%

\newcommand{\comm}[1]{}



\allowdisplaybreaks
\numberwithin{equation}{section}

\newtheorem{theorem}{Theorem}[section]
\newtheorem{proposition}[theorem]{Proposition}

\newtheorem{corollary}[theorem]{Corollary}
\theoremstyle{definition}
\newtheorem{example}[theorem]{Example}

\newtheorem*{remark}{Remark}
\newtheorem*{remarks}{Remarks}
\newtheorem*{definition}{Definition}


%

%
\newcommand{\norm}[1]{\lVert#1\rVert}

\begin{document}

\title[Residual Polynomials]{Asymptotics of Chebyshev Polynomials, V. Residual Polynomials}
\author[J.~S.~Christiansen, B.~Simon and M.~Zinchenko]{Jacob S.~Christiansen$^{1,4}$, Barry Simon$^{2,5}$ \\and
Maxim~Zinchenko$^{3,6}$}

\thanks{$^1$ Centre for Mathematical Sciences, Lund University, Box 118, 22100 Lund, Sweden.
 E-mail: stordal@maths.lth.se}

\thanks{$^2$ Departments of Mathematics and Physics, Mathematics 253-37, California Institute of Technology, Pasadena, CA 91125.
E-mail: bsimon@caltech.edu}

\thanks{$^3$ Department of Mathematics and Statistics, University of New Mexico,
Albuquerque, NM 87131, USA; E-mail: maxim@math.unm.edu}

\thanks{$^4$ Research supported by VR grant 2018-03500 from the Swedish Research Council.}

\thanks{$^5$ Research supported by NSF grant DMS-1665526.}

\thanks{$^6$ Research supported in part by Simons Foundation grant CGM-581256.}

\thanks{$^7$ Dedicated with great respect to the memory of Richard Askey, 1933--2019.}

\

\date{\today}
\keywords{Residual polynomials, Szeg\H{o}--Widom asymptotics, Totik--Widom upper bound}
\subjclass[2010]{41A50, 30C80, 30C10}

\begin{abstract} We study residual polynomials, $R_{x_0,n}^{(\fre)}$, $\fre\subset\bbR$, $x_0\in\bbR\bs\fre$, which are the degree at most $n$ polynomials with $R(x_0)=1$ that minimize the $\sup$ norm on $\fre$.  New are upper bounds on their norms (that are optimal in some cases) and Szeg\H{o}--Widom asymptotics under fairly general circumstances.  We also discuss several illuminating examples and some results in the complex case.
\end{abstract}

\maketitle

\section{Introduction} \lb{s1}

Dick Askey was a great fan of Gabor Szeg\H{o} as seen by his wonderful, readable notes on Szeg\H{o}'s papers in Szeg\H{o}'s complete works \cite{Askey}, which Dick edited as a clear labor of love.  In particular, it is clear that Askey was fond of Szeg\H{o} asymptotics so we are pleased to be able to dedicate this paper on an extension of such asymptotics to Dick's memory.  Even though Askey's work was largely on the algebraic side of the theory of orthogonal (and other) polynomials while our own has mainly been on the analytic side, his work has been so deep and so broad that it has impacted us.  In addition, Dick's warmth and kindness are legion.

Let $\fre \subset \bbC$ be a compact, not finite, set and $z_0\in\bbC\bs\fre$ a point which is fixed.  For any continuous, complex-valued function, $f$, on $\fre$ let
\begin{equation}\label{1.1}
  \norm{f}_\fre \equiv \sup_{z \in \fre} |f(z)|
\end{equation}
The residual polynomial, $R_{z_0,n}$, of $\fre$ normalized at $z_0$ is the unique polynomial that minimizes $\norm{P}_\fre$ over all polynomials, $P$, of degree at most $n$ with $P(z_0)=1$.  Such polynomials have been studied in numerical analysis as they have applications to the Krylov subspace iterations, see, for example, \cite{DTT98,Fis96,Kui06}.  Recently they have also been used to study the Remez inequality \cite{EicYud20}.  The residual norm is given by
\begin{equation}\label{1.2}
  r_{z_0,n} \equiv \norm{R_{z_0,n}}_\fre
\end{equation}
We will use $R_{z_0,n}^{(\fre)}$ and $r_{z_0,n}^{(\fre)}$ when we want to be explicit about the underlying set.  Since $R_{z_0,n}$ could be of degree less than $n$, $P=1$ and $R_{z_0,n-1}$ are trial polynomials and hence
\begin{align} \label{1.3}
  r_{z_0,n} \le r_{z_0,n-1} \le 1, \quad n\in\bbZ_+
\end{align}


These polynomials are clearly related to the Chebyshev polynomials, $T_{n}^{(\fre)}$, which minimize the sup norm over $\fre$, $t_{n}^{(\fre)}$, among all monic polynomials.  We will use heavily ideas from our papers \cite{CSZ1, CSYZ2, CSZ3, CSZ4} (the second joint with Yuditskii) discussing the asymptotics of such polynomials. While many of the extensions are direct, there are often subtle twists as we will see.

For the Chebyshev case, the dual problem of maximizing the leading coefficient of all degree $n$ polynomials with
\begin{equation}\label{1.2A}
  \norm{P_n}_\fre = 1
\end{equation}
is often useful and is trivially related to the minimization problem.  Similarly, here the dual problem of maximizing $P_n(z_0)$ over all degree at most $n$ polynomials with \eqref{1.2A} will play a role.  We will refer to the maximizers as dual residual polynomials and write them as  $\wti{R}_{z_0,n}$.

Basic to the theory is logarithmic potential theory (see \cite[Section 3.6]{HA} or \cite{ArmGar01, Hel09, Lan72, MF06, Ran95} for the basics of the subject). We will always assume that $\fre$ is a non-polar set and let $\rho_\fre$ and $g_\fre$ denote, respectively, the equilibrium measure and the Green's function of $\fre$.  In \cite{CSZ1,CSYZ2}, we  complexified the exponential Green's function $\exp[-g_\fre(z)]$, initially using a harmonic conjugate of $g_\fre$ near $z=\infty$, picking the branch so that this exponential looks like $Cz^{-1}+\text{O}(|z|^{-2})$ near infinity with $C>0$ and then analytically continuing.  This yields a multivalued analytic Blaschke-type function, $B_\fre$, on $(\bbC\cup\{\infty\})\bs\fre$ satisfying $|B_\fre(z)|=\exp[-g_\fre(z)]$.  In this paper, the $B_\fre$ we need will differ by a phase factor as we'll explain in detail in Section \ref{s5}. We normalize the the phase of $B_\fre$ so that $B_\fre(z_0)>0$.  

For a compact set $\fre\subset\bbC$, the outer domain, $\Omega$, of $\fre$ is the unbounded component of $(\hatC)\bs\fre$ and the outer boundary $O\pd(\fre)$ of $\fre$ is defined to be the boundary $\pd\Omega$.  The set $\hat \fre=(\hatC)\bs\Omega$ is called the polynomial convex hull of $\fre$.  Its boundary, $\pd\hat\fre$, coincides with the outer boundary $O\pd(\fre)$.  The Green's function, $g_\fre$, is positive on $\Omega$ and vanishes on the interior of $\hat\fre$, see for example \cite[Chapter~I.4]{SafTot97}.  If $\fre$ is regular for potential theory (which we usually assume), $g_\fre$ vanishes on all of $\hat\fre$ and hence $\hat\fre=\{z\in\bbC \,|\, g_\fre(z)=0\}$ and $\Omega=\{z\in\bbC\,|\,g_\fre(z)>0\}$.

For many years, the most striking aspect of the asymptotics of Chebyshev polynomials has been Widom's great 1969 discovery \cite{Wid69} that the suitably renormalized norms and asymptotics are almost periodic rather than a single limit.  Not surprisingly, our most important result here is that suitably renormalized $r_{z_0,n}$ and $R_{z_0, n}(z)$ are almost periodic, something which has not been hinted at in prior literature on residual polynomials (for the related Ahlfors problem, results of this type have been obtained by Eichinger--Yuditskii in \cite{EicYud18}).  In the work of Widom \cite{Wid69} on asymptotics of Chebyshev polynomials, a key object is $t_n/C(\fre)^n$, which, following Goncharov--Hatino\v{g}lu \cite{GH15}, have come to be called Widom factors.  In our situation the right analog, which we will still call Widom factors, are
\begin{equation}\label{1.4}
   W_n(\fre,z_0) \equiv r_{z_0,n}\,\left(e^{ng_\fre(z_0)}+e^{-ng_\fre(z_0)}\right), \quad n\in\bbN
\end{equation}
(see \eqref{1.5} below for why this is the correct normalization).

Section \ref{s2} will discuss a few general results on the general complex case including uniqueness of the minimizer and root asymptotics for $r_{z_0, n}$ and $|R_{z_0, n}|$.  There will also be a universal lower bound analogous to Szeg\H{o}'s result that $t_n\ge C(\fre)^n$.  Instead we will show that
\begin{equation}\label{1.4A}
  r_{z_0,n} \ge \exp[-ng_\fre(z_0)]
\end{equation}
a result that has appeared many times in the literature.

Most of the remainder of the paper focuses on the case when $\fre$ and $z_0=x_0$ are real.  In the Chebyshev case with $\fre\subset\bbR$, a critical role is played by the alternation theorem which goes back to Borel \cite{Borel} and Markov \cite{Markov}.  The version for residual polynomials, found by Achieser (aka Akhiezer) \cite{Ach} in 1932, is subtly different.  Section \ref{s3} begins with a proof of this result for the reader's convenience (given that the only proof we know in the literature is not readily available and not in English).  We then discuss a variety of applications.  Two unique to this situation (i.e., not relevant in the Chebyshev case) are the fact that $d_n \equiv \deg(R_{x_0, n})$ is always $n$ or $n-1$ (in the general complex case, the degree might be $0$) and that the dual polynomial, $\wti{R}_{x_0, n}$, is the same for all $x_0$ in the same connected component of $\bbR\bs\fre$, so, in particular, it equals the dual Chebyshev polynomial when $x_0$ is in either of the unbounded components.

As in \cite{CSZ1} for the Chebyshev case, the alternation theorem will let us show that
\begin{equation}\label{1.4B}
  \fre_n \equiv R_{x_0,n}^{-1}\left([-r_{x_0,n},r_{x_0,n}]\right)
\end{equation}
(where we emphasize that we mean the inverse as a map from $\bbC$ to itself) is a subset of $\bbR$.  This makes it what we called a period-$d_n$ set in \cite{CSZ1}, the spectrum of a period $d_n$ Jacobi matrix, which allows many detailed results.  In particular, we will prove that
\begin{equation}\label{1.5}
  2 \le W_n(\fre,x_0) \le 2 \exp[\PW(\fre,x_0)],\quad n\in\bbN
\end{equation}
where $\PW(\fre,x_0)$ is the Parreau--Widom constant of $\fre$ defined in Section~\ref{s3}.  The lower bound in \eqref{1.5} is due to Schiefermayr \cite{Sch11, Sch17}.  The upper bound is new here although it is an analog (with similar proof) of a result we proved for Chebyshev polynomials in \cite{CSZ1}.  Both inequalities are sharp and there are even cases where they are exact asymptotically for the $\liminf$ and $\limsup$!

Section \ref{s4} will discuss various interesting examples and includes a discussion of when $\deg(R_{x_0, n})$ is $n-1$.

Finally, Section \ref{s5} proves Szeg\H{o}--Widom asymptotics. To explain the main result of that section, we briefly recall what we called the Widom surmise in \cite{CSZ1}.  The two classical cases of Szeg\H{o} asymptotics \cite{L2Sz} concern limits of $z^{-n}P_n(z)$,  $z\notin\bbD$ for OPUC and of $[(z+\sqrt{z^2-4})/2]^{-n}P_n(z)$, $z\notin [-2,2]$ for OPRL whose measures have $[-2,2]$ as essential support.  The limit is the Szeg\H{o} function which is the solution of a minimization problem.  The prefactor in both cases is exactly what we called $B_\fre(z)^n$ and in both examples $\fre$ has capacity $1$ so a careful analysis suggests that one include a factor of $C(\fre)^n$. Indeed, Faber \cite{Fab1919} proved that for $\fre$ a connected and simply connected set (with analytic boundary), the Chebyshev polynomials, $T_{n}^{(\fre)}$, have what has come to be called Szeg\H{o} asymptotics (even though Faber's paper was earlier than Szeg\H{o}'s paper on OPUC asymptotics!), namely, that
\begin{equation}\label{1.6}
  B_\fre(z)^n T_n(z)/C(\fre)^n \to 1
\end{equation}
uniformly for $z$ in compact subsets of $\Omega$.  Widom realized that \eqref{1.6} cannot hold when $\Omega$ is not simply connected because the left side is not analytic on $\Omega$; rather, it is multivalued analytic.  Indeed, there is a character, $\chi_\fre$, of the fundamental group of $\Omega$  so that $B_\fre(z)$ is character automorphic (we'll recall what that means in Section \ref{s5}) with that character.  We will call by the name Widom minimizer the character automorphic function, $F(z;x_0,\chi)$, which is the unique (by arguments in Widom \cite{Wid69}, see also \cite{CSYZ2}) function with $F(z=x_0)=1$ minimizing the sup norm over $\Omega$, $\norm{F}_\Omega$.  Fixing $x_0$, we'll use $F_n$ for $F(\dott,x_0,\chi_\fre^n)$.  The Widom surmise says that in the Chebyshev case, the difference of the left side of \eqref{1.6} and $F_n$ goes to zero uniformly on $\Omega$.   Our analog replaces $C(\fre)$ by $e^{-g_\fre(x_0)}$.  As in the Chebyshev case, we'll require that $\fre$ obeys two conditions: Parreau--Widom (PW set) with a Direct Cauchy Theorem (DCT), notions we will also recall in later sections.  It says:

\begin{theorem} \lb{T1.1} Let $\fre\subset\bbR$ be a compact PW set with DCT and let $x_0\in\bbR\bs\fre$. Then
\begin{equation} \label{1.7}
  \lim_{n\to\infty} \left[ e^{n g_\fre(x_0)}\norm{R_{x_0,n}}_\fre - 2\norm{F_n}_\Omega \right] = 0
\end{equation}
and uniformly for $z$ in compact subsets of $\Omega$,
\begin{equation} \label{1.8}
  \lim_{n\to\infty} \left[ e^{n g_\fre(x_0)} B_\fre(z)^n R_{x_0,n}(z) - F_n(z) \right] = 0
\end{equation}
\end{theorem}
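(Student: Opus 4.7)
The plan is to adapt the Szegő--Widom proof for Chebyshev polynomials from \cite{CSZ1,CSYZ2} to the residual setting, with the normalization $R(x_0)=1$ replacing the monic one. Define
$$\psi_n(z) := e^{ng_\fre(x_0)} B_\fre(z)^n R_{x_0,n}(z).$$
Since $R_{x_0,n}$ is a polynomial and $B_\fre$ is character-automorphic with character $\chi_\fre$, $\psi_n$ is character-automorphic with character $\chi_\fre^n$; the convention $B_\fre(x_0)>0$ (so $B_\fre(x_0)=e^{-g_\fre(x_0)}$) yields $\psi_n(x_0)=1$, while $|B_\fre|=1$ on $\fre$ yields $\norm{\psi_n}_\Omega = e^{ng_\fre(x_0)}\, r_{x_0,n}$. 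Reality of $R_{x_0,n}$ on $\fre\subset\bbR$ together with $B_\fre^-(x) = \overline{B_\fre^+(x)}$ forces $\psi_n^-(x) = \overline{\psi_n^+(x)}$ on $\fre$. Thus \eqref{1.7} reduces to $\norm{\psi_n}_\Omega - 2\norm{F_n}_\Omega \to 0$ and \eqref{1.8} to $\psi_n - F_n \to 0$ uniformly on compact subsets of $\Omega$.

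For the upper bound I would construct a trial polynomial from $F_n$ and its Schwarz reflection $F_n^*(z):=\overline{F_n(\bar z)}$, which is character-automorphic with character $\chi_\fre^{-n}$. The combination
$$H_n(z) := B_\fre(z)^{-n} F_n(z) + B_\fre(z)^n F_n^*(z)$$
has trivial total character, so it descends to a single-valued meromorphic function on $\Omega \cup \{\infty\}$ with a unique pole of order $n$ at $\infty$; its boundary values on $\fre$ from above and below are complex conjugates of modulus at most $\norm{F_n}_\Omega$, whence $\norm{H_n}_\fre \le 2\norm{F_n}_\Omega$. Under PW+DCT, the approximation techniques of \cite[Sec.~6]{CSZ1} produce polynomials $P_n$ of degree $\le n$ with $B_\fre^n P_n - e^{-ng_\fre(x_0)} H_n \to 0$ on $\fre$ and $P_n(x_0)\to 1$; rescaling to enforce $P_n(x_0)=1$ exactly and invoking minimality of $R_{x_0,n}$ gives $e^{ng_\fre(x_0)}\, r_{x_0,n} \le 2\norm{F_n}_\Omega + o(1)$.

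For the matching lower bound and the pointwise statement, I would run a compactness/uniqueness argument. The upper bound makes $\{\psi_n\}$ a normal family on $\Omega$. Along any subsequence $n_k$ with $\chi_\fre^{n_k}\to\chi_\infty$ (possible by compactness of the character group), $\psi_{n_k}\to\psi_\infty$ uniformly on compact subsets of $\Omega$, with $\psi_\infty$ character-automorphic with character $\chi_\infty$, $\psi_\infty(x_0)=1$, and the boundary reality passing to $\psi_\infty$. The uniqueness of the Widom minimizer (Widom \cite{Wid69}; see also \cite{CSYZ2}) then identifies $\psi_\infty = F_\infty := F(\dott;x_0,\chi_\infty)$, while the structural relation $\psi_n - F_n \approx B_\fre^{2n} F_n^*$ (forced by the polynomial constraint combined with the boundary reality) pins $\norm{\psi_\infty}_\Omega = 2\norm{F_\infty}_\Omega$. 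Continuity of $\chi\mapsto F(\dott;x_0,\chi)$, guaranteed by DCT, upgrades subsequential convergence to the full sequence, establishing \eqref{1.7} and \eqref{1.8}.

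The main obstacle is the factor of $2$: on compact subsets of $\Omega$, $\psi_n$ converges to $F_n$, yet $\norm{\psi_n}_\Omega = \norm{\psi_n}_\fre$ is asymptotically $2\norm{F_n}_\Omega$. The ``missing'' factor comes from the term $B_\fre^{2n} F_n^*$ which decays on any compact subset of $\Omega$ (since $|B_\fre|^{2n} = e^{-2ng_\fre}\to 0$ wherever $g_\fre>0$) but has modulus $|F_n^*|$ on $\fre$, where it adds in phase with $F_n$ at the extrema. Rigorously capturing this boundary-only contribution, and verifying that the phase alignment yields exactly $2\norm{F_n}_\Omega$ rather than a smaller quantity from cancellation, is what essentially requires the DCT hypothesis.
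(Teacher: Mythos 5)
Your proposal follows the Widom/\cite{CSZ1}-style route (construct trial functions from the Widom minimizer $F_n$, then invoke a normal-family/uniqueness argument), whereas the paper deliberately takes the different route of \cite{CSYZ2}: it uses the Alternation Theorem and the period-$d_n$ sets $\fre_n$ to obtain the exact factorization $\Delta_n(z)=B_n(z)^{d_n}+B_n(z)^{-d_n}$ (Theorem~\ref{T3.8}), i.e.\
\[
e^{ng_\fre(x_0)}B_\fre^n R_{x_0,n} \;=\; \tfrac12\,e^{ng_\fre(x_0)}r_{x_0,n}\Bigl[\,B_\fre^n B_n^{d_n}\;+\;\tfrac{B_\fre^n}{B_n^{d_n}}\,\Bigr],
\]
and then identifies $M_n=B_\fre^n/B_n^{d_n}\to B_S=Q_{\chi_\fre^n}$ via zero counting (Theorem~\ref{T5.1}), while $B_\fre^n B_n^{d_n}\to 0$ on compacts of $\Omega$. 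This gives (\ref{1.7}) and (\ref{1.8}) and the factor $2$ simultaneously, without a separate upper-bound step. The paper explicitly remarks (Remark~5 after Theorem~\ref{T1.1}) that because a residual analogue of Widom's norm asymptotics was not a priori available, the \cite{CSZ1} route could not be used here; your sketch re-encounters exactly the obstruction that motivated this choice.

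There are two genuine gaps in your argument, and they are not merely technical.

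First, the trial-polynomial step is not established. Your $H_n=B_\fre^{-n}F_n+B_\fre^n F_n^*$ is single-valued meromorphic on $\Omega\cup\{\infty\}$ with a pole of order $n$ at $\infty$, but $\Omega$ is multiply connected, so $H_n$ is \emph{not} a polynomial; it has a non-zero jump across $\fre$ (the boundary values from above and below are $H_n^+$ and $\overline{H_n^+}$, so the jump is $2i\,\Ima H_n^+$, generically non-zero). Passing from $H_n$ to an actual degree-$\le n$ polynomial $P_n$ with $\norm{P_n}_\fre\le 2e^{-ng_\fre(x_0)}\norm{F_n}_\Omega+o(e^{-ng_\fre(x_0)})$ is precisely the delicate construction that Widom carried out only for finite unions of smooth arcs; under bare PW+DCT it is not a citation to \cite[Sec.~6]{CSZ1} (which took Widom's (\ref{1.7}) as input rather than re-deriving it). Also, as written, the expression $B_\fre^n P_n-e^{-ng_\fre(x_0)}H_n$ mixes character $\chi_\fre^n$ with a single-valued function, so the claimed convergence is not even a well-posed statement without choosing a branch.

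Second, the identification $\psi_\infty=F_\infty$ does not follow from uniqueness of the Widom minimizer. Uniqueness identifies a function in $H^\infty(\Omega,\chi_\infty)$ with value $1$ at $x_0$ \emph{only if} it attains the minimal norm $\norm{F_\infty}_\Omega$. Locally uniform convergence on compacts of $\Omega$ gives no control of $\norm{\psi_\infty}_\Omega$ beyond $\norm{\psi_\infty}_\Omega\le\liminf\norm{\psi_{n_k}}_\Omega\approx 2\norm{F_\infty}_\Omega$, which is consistent with many non-minimal functions. The ``structural relation'' $\psi_n-F_n\approx B_\fre^{2n}F_n^*$ that you invoke to pin down $\norm{\psi_\infty}_\Omega$ is not forced by reality and the polynomial constraint alone --- it is essentially the statement being proved, and in the paper it emerges only after the exact factorization from Theorem~\ref{T3.8} (where the analogous relation is an identity: $\psi_n=\tfrac12 e^{ng_\fre(x_0)}r_{x_0,n}\,M_n\,[\,1+(B_\fre B_n)^{n+d_n}\,]$-type decomposition) and the zero-counting convergence $M_n\to B_S$. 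Without that structural input, the compactness argument does not close.

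If you want to pursue a complete proof, the missing idea is exactly the periodic-approximant machinery: $\fre\subset\fre_n\subset\bbR$ with $R_{x_0,n}$ a scaled discriminant for $\fre_n$, the formula $\norm{R_{x_0,n}}_\fre=1/\cosh(d_n g_n(x_0))$, and the identification $B_\fre^n/B_n^{d_n}\to B_S=Q_{\chi_\fre^n}$. That is what makes the factor of $2$ and the limit $F_n$ drop out simultaneously, with DCT entering only through continuity of $\chi\mapsto Q_\chi(x_0)$ in the final passage from subsequences to the full sequence.
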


\begin{remarks} 1. Since the functions are multivalued, the proper formulation should talk about analytic functions on the universal cover of $\Omega$.  We could just as well put in branch cuts and discuss the functions and their boundary values on the cuts.  Since the difference is character automorphic, convergence in this cut region implies it on the universal cover.

2.  It is easy to see that \eqref{1.7} is equivalent to
\begin{equation}
\lim_{n\to\infty} \left[ W_n(\fre,x_0) - 2\norm{F_n}_\Omega \right] = 0
\end{equation}

3. Recall that $B_\fre$ is normalized by $B_\fre(x_0)>0$.  This implies that the quantity on the left vanishes identically at $x_0$.

4. Since $\norm{B_\fre}_\Omega = 1$, \eqref{1.8} might suggest that \eqref{1.7} holds without the factor of $2$.  That they aren't incompatible is because \eqref{1.8} doesn't hold uniformly as one approaches $\fre = \pd\Omega$.  One can partly understand where the factor of $2$ comes from by looking at the extra term we took in defining the Widom factor, \eqref{1.4}, which suggests that one should instead write \eqref{1.8} as
\begin{equation}\label{1.9}
  \lim_{n\to\infty} \left[ e^{n g_\fre(x_0)} R_{x_0,n}(z)B_\fre(z)^n - F_n(z)(1+B_\fre(z)^{2n}) \right] = 0
\end{equation}
Away from $\fre$, $B_\fre(z)^{2n}$ is negligible, it is not  on $\fre$, and, if the phase is coherent, the limit can be twice as large.

5. It is Widom \cite{Wid69} who noticed that for Chebyshev polynomials on $\fre=[-1,1]$ (which, up to a normalization constant, are the classical Chebyshev polynomials of the first kind), the analog of Theorem~\ref{T1.1} holds.  He conjectured it holds in general for finite gap sets but was only able to prove \eqref{1.7}.  That \eqref{1.8} holds for general finite gap sets was the main result of \cite{CSZ1}.  The proof there used Widom's result rather than rederiving it.  Fortunately, in \cite{CSYZ2}, we found a proof of both facts (for more general PW/DCT sets). Because we don't a priori have the analog of Widom's result for residual rather than Chebyshev polynomials, it is the approach of \cite{CSYZ2} that we'll adapt to the setting of residual polynomials in this paper.
\end{remarks}

\section{Basic Results} \lb{s2}

We begin with the case of $\fre$ in general position in $\bbC$.  As we noted, there are dual residual polynomials, $\wti{R}_{z_0, n}$, and special values, $\wti{r}_{z_0, n} = \wti{R}_{z_0, n}(z_0)$.  It is easy to see that the direct and dual problems are related by
\begin{equation}\label{2.1}
  \wti{R}_{z_0, n} = R_{z_0, n}/r_{z_0, n} ; \quad\; \wti{r}_{z_0, n} = 1/r_{z_0, n}
\end{equation}

There are two main results for the general case that we want to mention, uniqueness and root asymptotics.  An \emph{extreme point} for a polynomial $P$ is a point $z\in\fre$ for which $|P(z)|=\norm{P}_\fre$.

\begin{theorem} \lb{T2.1} \begin{SL}
\item[\rm{(a)}] Any residual polynomial, $R_{z_0,n}^{(\fre)}$, has at least $n+1$ extreme points.

\item[\rm{(b)}] The degree $n$ residual problem has a unique solution.
\end{SL}
\end{theorem}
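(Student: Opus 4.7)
My plan is the classical two-step argument: establish the extreme-point count in (a) by a Kolmogorov-style perturbation, then deduce uniqueness in (b) by averaging two putative minimizers and invoking (a) to produce more common values than the degree allows.

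For (a), suppose toward a contradiction that $R:=R_{z_0,n}$ has only $k\leq n$ extreme points $z_1,\dots,z_k\in\fre$. Because $R(z_0)=1$ and $\fre$ is infinite, $\|R\|_\fre>0$ and in particular $R(z_j)\neq 0$ for every $j$. The strategy is to exhibit a polynomial $Q$ of degree $\leq n$ with $Q(z_0)=0$ and
\begin{equation*}
\Re\bigl(\overline{R(z_j)}\,Q(z_j)\bigr)<0,\qquad j=1,\dots,k.
\end{equation*}
Given such a $Q$, the expansion $|R+\eps Q|^2=|R|^2+2\eps\Re(\overline{R}\,Q)+O(\eps^2)$ forces $|R+\eps Q|<\|R\|_\fre$ at each $z_j$ for small $\eps>0$, hence on a neighborhood $U_j$ of each $z_j$ by continuity; compactness gives $|R|<\|R\|_\fre$ on $\fre\setminus\bigcup U_j$ uniformly, so $\|R+\eps Q\|_\fre<\|R\|_\fre$, contradicting minimality (note $(R+\eps Q)(z_0)=1$ and $\deg(R+\eps Q)\leq n$). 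To construct $Q$, write $Q(z)=(z-z_0)S(z)$ with $\deg S\leq n-1$. Since $z_1,\dots,z_k$ are distinct and $k\leq n$, Lagrange interpolation produces such an $S$ with $S(z_j)=-1/[\overline{R(z_j)}(z_j-z_0)]$, which makes $\overline{R(z_j)}Q(z_j)=-1$ for every $j$.

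For (b), if $R_1$ and $R_2$ were two degree-$\leq n$ residual polynomials with $R_1\neq R_2$, their average $R:=\tfrac12(R_1+R_2)$ satisfies $R(z_0)=1$ and, by the triangle inequality, $\|R\|_\fre\leq r_{z_0,n}$, so it is also a minimizer. Apply (a) to obtain at least $n+1$ extreme points $\zeta_1,\dots,\zeta_{n+1}$ of $R$. At each $\zeta_j$, the midpoint $R(\zeta_j)$ of $R_1(\zeta_j),R_2(\zeta_j)$ lies on the circle $\{|w|=r_{z_0,n}\}$ while both endpoints lie in the closed disk of radius $r_{z_0,n}$; strict convexity of the disk forces $R_1(\zeta_j)=R_2(\zeta_j)$. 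Hence $R_1-R_2$ is a polynomial of degree $\leq n$ with $n+1$ distinct zeros, so $R_1\equiv R_2$.

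The one place where care is required is the interpolation step in (a): the target values make sense precisely because $R$ does not vanish at extreme points (and $z_j\neq z_0$), and the hypothesis $k\leq n$ is exactly what places the interpolant into the available space of polynomials $S$ of degree $\leq n-1$. The rest is standard bookkeeping with the real part at extreme points, and the coupling between (a) and (b) rests on the elementary fact that a polynomial of degree at most $n$ vanishing at $n+1$ distinct points must vanish identically.
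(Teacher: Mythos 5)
Your proof is correct and follows essentially the same route as the paper: for (a), a Lagrange-interpolation perturbation that strictly decreases the norm when there are too few extreme points; for (b), averaging two putative minimizers, invoking (a), and using strict convexity of the disk together with the degree bound. The only cosmetic difference is in the construction of the perturbing $Q$ — the paper interpolates $Q(z_j)=R(z_j)$ and subtracts $\eps Q$, while you use the Kolmogorov-criterion form $\Re\bigl(\overline{R(z_j)}\,Q(z_j)\bigr)<0$ and add $\eps Q$ — and these are interchangeable.
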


\begin{remarks} 1. There can be infinitely many extreme points, see Example~\ref{E2.3}.

2. This extends the argument given in \cite{CSZ4} for Chebyshev polynomials and is well known.
\end{remarks}

\begin{proof} (a)  We claim that any norm minimizer, $P$, of degree at most $n$ with $P(z_0)=1$ must have at least $n+1$ extreme points.  For, if there are only $z_1,\dots,z_k$ with $k\le n$ distinct extreme points for $P$, then, by Lagrange interpolation, we can find a polynomial $Q$ of degree $k$ so that $Q(z_0)=0$ and $Q(z_j)=P(z_j)$, $j=1,\dots,k$.
Then for $\varepsilon$ small and positive, it is easy to see that $(P-\varepsilon Q)(z_0)=1$ and $\norm{P-\varepsilon Q}_\fre < \norm{P}_\fre$ violating the fact that $P$ is a norm minimizer since $\deg(P-\varepsilon Q)\le\deg(P)\le n$.

(b) Suppose now that $P$ and $Q$ are both norm minimizers among polynomials of degree at most $n$ taking the value $1$ at $z_0$.  Then so is $R=\frac{1}{2}(P+Q)$.  Pick $\{z_j\}_{j=1}^{n+1}$ distinct extreme points for $R$.  Since $|R(z_j)|=r_{z_0,n}$ and $|P(z_j)|, |Q(z_j)|\le r_{z_0,n}$, we must have that $P(z_j)=Q(z_j)$ for $j=0,1,\dots,n+1$.  As $\deg(P-Q)\le n$, we have that $P=Q$ completing the proof of uniqueness of the minimizing polynomial.
\end{proof}

The first assertion in the following, which we'll need for root asymptotics, appears many times in the literature.  The second assertion is an analog of a result we proved for Chebyshev polynomials in \cite{CSZ3}.

\begin{theorem} \lb{T2.2}
Let $\fre\subset\bbC$ be a compact non-polar set and $z_0\in\bbC\bs\fre$.  Then for all $n\in\bbN$,
\begin{equation} \label{2.2}
  \norm{R_{z_0,n}}_\fre \ge \exp[-ng_\fre(z_0)]
\end{equation}
The equality in \eqref{2.2} is attained for some $n=n_0\in\bbN$ if and only if there exists a polynomial $P$ of degree $n_0$ such that
\begin{equation} \label{2.3}
  O\pd(\fre) = P^{-1}(\pd\bbD)
\end{equation}
In this case, $R_{z_0,n_0}$ is of degree $n_0$ and equality in \eqref{2.2} is attained for $n=kn_0$ for all $k\in\bbN$.
\end{theorem}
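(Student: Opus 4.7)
My plan is: establish the lower bound via Bernstein--Walsh, then prove both directions of the characterization, using the strong maximum principle for the harder direction. For the lower bound \eqref{2.2}, I would apply the Bernstein--Walsh inequality to $R_{z_0,n}$: with $d := \deg(R_{z_0,n}) \le n$, it yields $|R_{z_0,n}(z)| \le \norm{R_{z_0,n}}_\fre\, e^{d\, g_\fre(z)}$ on $\Omega$; evaluating at $z_0$ with $R_{z_0,n}(z_0)=1$ gives
\[ \norm{R_{z_0,n}}_\fre \ge e^{-d\, g_\fre(z_0)} \ge e^{-n\, g_\fre(z_0)}, \]
using $g_\fre(z_0)>0$ in the last step.

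For the implication that existence of $P$ of degree $n_0$ with $O\pd(\fre) = P^{-1}(\pd\bbD)$ forces equality at all $n = kn_0$, I would first identify the Green's function: the maximum principle on the bounded components of $\bbC \setminus P^{-1}(\pd\bbD)$ places all zeros of $P$ in $\hat\fre$, so $n_0^{-1}\log|P|$ is harmonic on $\Omega$, vanishes on $\pd\Omega$, and has the correct logarithmic pole $\log|z|+O(1)$ at infinity; uniqueness of the Green's function then gives $g_\fre(z) = n_0^{-1}\log|P(z)|$ on $\Omega$. The trial polynomial $Q_k(z) := (P(z)/P(z_0))^k$ satisfies $Q_k(z_0)=1$, $\deg Q_k = kn_0$, and $\norm{Q_k}_\fre = |P(z_0)|^{-k} = e^{-kn_0 g_\fre(z_0)}$, which meets the lower bound; by the uniqueness statement in Theorem~\ref{T2.1}(b), $R_{z_0,kn_0} = Q_k$, of degree exactly $kn_0$.

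For the converse, assume $\norm{R_{z_0,n_0}}_\fre = e^{-n_0 g_\fre(z_0)}$. Since $g_\fre(z_0)>0$, the chain of inequalities in the lower bound forces $\deg(R_{z_0,n_0}) = n_0$. Setting $P := R_{z_0,n_0}/\norm{R_{z_0,n_0}}_\fre$, I would consider the subharmonic function $u := \log|P| - n_0\, g_\fre$ on $\Omega$: it is $\le 0$ on $\pd\Omega$, bounded at infinity (via the Chebyshev bound on leading coefficients of polynomials of sup norm $1$), and equals $0$ at the interior point $z_0$, so the strong maximum principle yields $u \equiv 0$ on $\Omega$. Hence $|P| > 1$ on $\Omega$ and $|P| = 1$ on $O\pd(\fre)$, giving $O\pd(\fre) \subseteq P^{-1}(\pd\bbD)$. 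For the reverse inclusion, on each bounded component of $\bbC \setminus O\pd(\fre)$ the maximum principle gives $|P| \le 1$, with strict inequality in the interior since a nonconstant polynomial cannot have constant modulus on any open set. Therefore $P^{-1}(\pd\bbD) = O\pd(\fre)$.

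I expect the main obstacle to be this last inclusion $P^{-1}(\pd\bbD) \subseteq O\pd(\fre)$: ruling out extraneous level-one contours of $|P|$ inside $\hat\fre$ requires combining the strict maximum principle with the rigidity $\deg P = n_0 \ge 1$. The propagation from the pointwise equality at $z_0$ to equality throughout $\Omega$, via the strong maximum principle for subharmonic functions, is the other place where care is needed.
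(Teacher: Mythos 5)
Your proof is correct, and on the central implication (equality at $n_0$ forces the lemniscate structure) you take a genuinely different route from the paper. The paper works with the multivalued analytic function $P\,B_\fre^{n_0}$, bounds its modulus by $1$ on $\Omega$, observes $P(z_0)B_\fre(z_0)^{n_0}=1$, and invokes the maximum principle to conclude $P\,B_\fre^{n_0}\equiv 1$ on $\ol\Omega$; you instead work directly with the real subharmonic function $u=\log|P|-n_0 g_\fre$ and apply the strong maximum principle for subharmonic functions. These are equivalent in content ($\log|P\,B_\fre^{n_0}|=u$), but yours avoids the complexification $B_\fre$ entirely and is thus more elementary; the paper's choice is natural given that $B_\fre$ is already the workhorse elsewhere in the paper and identity \eqref{2.5A} is reused for the degree computation. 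A second, smaller difference: you apply Bernstein--Walsh with the actual degree $d=\deg R_{z_0,n}$ rather than $n$, which gives the chain $\norm{R_{z_0,n_0}}_\fre\ge e^{-d\,g_\fre(z_0)}\ge e^{-n_0 g_\fre(z_0)}$ and hence the degree statement $d=n_0$ immediately from equality, whereas the paper extracts $\deg P=n_0$ afterward from the $Cz^{-1}$ behavior of $B_\fre$ at infinity. One spot worth tightening in your write-up: to locate the zeros of $P$ inside $\hat\fre$ in the converse direction, the operative fact is that $|P|>1$ on the unbounded component of $\bbC\bs P^{-1}(\pd\bbD)$ (because $|P|\to\infty$ at $\infty$ and $|P|\neq 1$ there), not the maximum principle on the bounded components; the latter only gives $|P|\le 1$ there and does not by itself exclude zeros from $\Omega$.
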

\begin{proof}
The lower bound \eqref{2.2} follows from the Bernstein--Walsh inequality (\cite[Chap.~III, Eq.~(2.4)]{SafTot97} or \cite[Theorem~3.7.1]{HA}) for $R_{z_0,n}$,
\begin{equation} \label{2.4}
  \frac{|R_{z_0,n}(z)|}{\norm{R_{z_0,n}}_\fre} \le \exp[ng_\fre(z)],\quad z\in\bbC
\end{equation}
evaluated at $z=z_0$.

Suppose equality is attained in \eqref{2.2} for $n=n_0$.  Consider the dual polynomial $P(z):=R_{z_0,n_0}(z)/\norm{R_{z_0,n_0}}_\fre$.  Then $\norm{P}_{\hat\fre}=1$ and hence
\begin{equation} \label{2.5}
  \{z:|P(z)|>1\}\subset\Omega
\end{equation}
By \eqref{2.4}, the function $PB_\fre^n$, which is multivalued analytic on $\Omega$, has $\norm{PB_\fre^n}_{\Omega}\le1$.  Since $B_\fre(z_0)>0$, the assumption of equality in \eqref{2.2} implies $P(z_0)B_\fre(z_0)^n=1$ and hence, by the maximum principle,
\begin{equation}\label{2.5A}
   PB_\fre^n=1 \text{ on } \ol\Omega
\end{equation}
Thus on $\ol\Omega$, we have that $|P(z)|=\exp\left[ng_\fre(z)\right]$ and hence
\begin{equation} \label{2.6}
  \Omega\subset\{z:g_\fre(z)>0\}\subset\{z:|P(z)|>1\}
\end{equation}
It follows from \eqref{2.5} and \eqref{2.6} that $\{z:|P(z)|>1\}=\Omega$ and hence also the boundaries of the sets are equal which is \eqref{2.3}.  Moreover, by \eqref{2.5A} and the leading $Cz^{-1}$ behavior of $B_\fre(z)$ near infinity, one sees that $\deg(P)=\deg(R_{z_0,n_0})=n_0$.

Conversely, suppose \eqref{2.3} holds.  Then $g_\fre(z)=\tfrac{1}{n_0}\log|P(z)|$ and so $\exp[-kn_0g_\fre(z_0)]=|P(z_0)|^{-k}$.  Let $Q(z)=[P(z)/P(z_0)]^k$.  Then $Q$ is of degree $kn_0$, $Q(z_0)=1$, and $\norm{Q}_\fre=|P(z_0)|^{-k}=\exp[-ng_\fre(z_0)]$ since $\norm{P}_\fre=1$.  Using $Q$ as a trial polynomial, one has that $\norm{R_{z_0,n}}_\fre\le\norm{Q}_\fre$,  which implies equality in \eqref{2.2} for $n=kn_0$.
\end{proof}

\begin{example} \lb{E2.3} Let $\fre=\ol{\bbD}$ and $|z_0|>1$.  Then by the above, $R_{z_0,n}(z)=(z/z_0)^n$ so the dual residual polynomial is equal to the dual Chebyshev polynomial (which is also the Chebyshev polynomial).  Moreover, every point in $\partial\fre$ is an extreme point showing there are infinitely many such points.  Indeed, the above shows this is true for any lemniscate (i.e., set of the form \eqref{2.3}).
\end{example}

\begin{theorem} \lb{T2.4} Let $\fre\subset\bbC$ be a compact non-polar set and $z_0\in\bbC\bs\fre$.  Then
\begin{SL}
\item[\rm{(a)}]
\begin{equation} \label{2.7}
  \lim_{n\to\infty}\norm{R_{z_0,n}}_\fre^{1/n} = \exp[-g_\fre(z_0)]
\end{equation}

\item[\rm{(b)}] If $K$ is a closed set containing all zeros of $R_{z_0,n}$ for large $n$, but not the point $z_0$, and so that $\bbC \bs K$ is connected, then
\begin{equation}\label{2.8}
  | R_{z_0, n}(z) |^{1/n} \to \exp [g_\fre(z)-g_\fre(z_0)]
\end{equation}
uniformly on compact subsets of $\bbC \bs K$.

In particular, if $\fre\subset\bbR$, $z_0\in\bbR\bs \fre$ and $(\al,\be)\subset\bbR\bs\fre$ is an interval not containing any zeros of $R_{z_0, n}$ for large $n$ (this always holds for the gap of $\fre$ containing $z_0$), then \eqref{2.8} holds uniformly on compact subsets of $(\bbC \bs \bbR)\cup(\al,\be)$.
\end{SL}
\end{theorem}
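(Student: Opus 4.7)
For part (a), the lower bound $\|R_{z_0,n}\|_\fre\ge e^{-n g_\fre(z_0)}$ is Theorem~\ref{T2.2}, so only the matching upper bound is needed. I would exhibit trial polynomials built from Fekete points: if $\{\zeta_1^{(n)},\dots,\zeta_n^{(n)}\}$ is an $n$-point Fekete set for $\fre$ and $F_n(z)=\prod_{j=1}^n(z-\zeta_j^{(n)})$ is the associated Fekete polynomial, classical potential theory (see \cite{SafTot97,Ran95}) yields $\|F_n\|_\fre^{1/n}\to C(\fre)$ and $|F_n(z_0)|^{1/n}\to C(\fre)\,e^{g_\fre(z_0)}$ for $z_0\in\Omega$; hence $Q_n=F_n/F_n(z_0)$ is a valid trial polynomial with $\|Q_n\|_\fre^{1/n}\to e^{-g_\fre(z_0)}$, forcing $r_{z_0,n}^{1/n}\to e^{-g_\fre(z_0)}$. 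If instead $z_0$ lies in a bounded component of $\bbC\bs\fre$, the maximum modulus principle on that component gives $\|R_{z_0,n}\|_\fre\ge 1$, and the statement is trivial since $g_\fre(z_0)=0$ for regular $\fre$.

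For part (b), set $h_n(z)=\tfrac{1}{n}\log|R_{z_0,n}(z)|$ and $u(z)=g_\fre(z)-g_\fre(z_0)$. The Bernstein--Walsh inequality \eqref{2.4} combined with (a) immediately gives $\limsup_{n\to\infty} h_n(z)\le u(z)$ locally uniformly, which is the easy half. For the matching $\liminf$ bound, observe that for $n$ large all zeros of $R_{z_0,n}$ lie in $K$, so $h_n$ is harmonic on the connected domain $\bbC\bs K$, satisfies $h_n(z_0)=0$, and is locally uniformly bounded above there. Applying Harnack's inequality locally to the non-negative harmonic functions $M_L-h_n$ (with $M_L$ a local upper bound on a compact containing $z_0$) shows that $\{h_n\}$ is locally uniformly bounded on $\bbC\bs K$, and Montel's theorem for harmonic functions produces a subsequence $h_{n_k}\to h$ uniformly on compacts, with $h$ harmonic on $\bbC\bs K$, $h(z_0)=0$, and $h\le u$.

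The decisive step is then a maximum principle argument. The function $v:=h-u$ is superharmonic on $\bbC\bs K$, since $h$ is harmonic there while $-u=g_\fre(z_0)-g_\fre$ is superharmonic on all of $\bbC$ (because $g_\fre$ is subharmonic on $\bbC$ for non-polar $\fre$). Since $v\le 0$ on $\bbC\bs K$ with $v(z_0)=0$, the interior-maximum principle for superharmonic functions on the connected open set $\bbC\bs K$ forces $v\equiv 0$, so $h\equiv u$; as every convergent subsequence has the same limit $u$, the full sequence $h_n$ converges to $u$ uniformly on compacts, which is \eqref{2.8}. The final ``in particular'' statement follows by applying (b) with $K=(-\infty,\al]\cup[\be,\infty)$, whose complement in $\bbC$ is connected and contains $z_0$; absence of zeros in the gap of $\fre$ containing $z_0$ is a consequence of the Achieser alternation theorem proved in Section~\ref{s3}. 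The main technical subtlety is verifying the superharmonic comparison on the possibly unbounded domain $\bbC\bs K$, where the connectedness hypothesis is essential and cannot be dropped.
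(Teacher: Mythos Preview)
Your proof of (a) is essentially the paper's: Fekete trial polynomials give the upper bound, and the Bernstein--Walsh lower bound \eqref{2.2} gives the other direction.

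For (b) you take a genuinely different route from the paper. The paper argues in one stroke: the function
\[
\tilde h_n(z)=\tfrac{1}{n}\log\norm{R_{z_0,n}}_\fre + g_\fre(z) - \tfrac{1}{n}\log|R_{z_0,n}(z)|
\]
is non-negative by Bernstein--Walsh and harmonic on $\bbC\bs K$, with $\tilde h_n(z_0)\to 0$ by part (a); a single application of Harnack's inequality to the non-negative harmonic $\tilde h_n$ then forces $\tilde h_n\to 0$ uniformly on compacts, which is exactly \eqref{2.8}. You instead split into upper and lower halves, extract subsequential limits via a normal-families argument, and identify the limit using the maximum principle for the superharmonic function $h-u$. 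Both arguments are correct. The paper's is shorter and avoids subsequences altogether; yours has the small advantage of never needing $g_\fre$ to be \emph{harmonic} on $\bbC\bs K$ (you only use that it is subharmonic on $\bbC$), so the edge case $\fre\not\subset K$ is automatically covered, whereas the paper's Harnack step tacitly uses $\fre\subset K$, which holds in all the applications. Your closing caveat about unbounded domains is not a genuine obstacle: the interior maximum principle for superharmonic functions on a connected open set requires no boundedness hypothesis, so the step $v\le 0$, $v(z_0)=0$ $\Rightarrow$ $v\equiv 0$ goes through as stated.
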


\begin{remarks} 1. The analog of (a) for Chebyshev polynomials is sometimes called the Faber--Fekete--Szeg\H{o} theorem after \cite{Fab1919, Fek1923, Sze1924}; the result for residual polynomials appears many times in the literature, see for example \cite{DTT98, Kui06}.  (b) would be expected by any attentive reader of Stahl--Totik \cite{StaTot92} or Saff--Totik \cite{SafTot97}; our proof here is essentially the same as the analog for Chebyshev polynomials in \cite{CSZ1}.

2. The ``in particular'' assertion in (b) uses facts proven in the next section and is immediate given those facts.
\end{remarks}

\begin{proof} (a) Let $Q_n$ with $\deg(Q_n)=n$ be Fekete polynomials \cite[Definition~5.5.3]{Ran95} for the set $\fre$.  Then, by \cite[Theorems~5.5.2 and 5.5.7]{Ran95}, locally uniformly on $\bbC\bs\fre$, one has that
\begin{equation} \lb{2.9}
  \left(\frac{|Q_n(z)|}{\norm{Q_n}_\fre}\right)^{1/n} \to \exp[g_\fre(z)]
\end{equation}
Since $R_{z_0,n}$ is a norm minimizer, $\norm{R_{z_0,n}}_\fre \le \norm{Q_n/Q_n(z_0)}_\fre$ and hence
\begin{equation} \lb{2.10}
  \limsup_{n\to\infty}\, \norm{R_{z_0,n}}_\fre^{1/n} \le
  \limsup_{n\to\infty} \left(\frac{\norm{Q_n}_\fre}{|Q_n(z_0)|}\right)^{1/n} =
  \exp[-g_\fre(z_0)]
\end{equation}
Combined with the lower bound \eqref{2.2}, this yields \eqref{2.7}.

(b) It follows from the Bernstein--Walsh inequality \eqref{2.4} that
\begin{align}
  | R_{z_0, n}(z) |^{1/n} \leq || R_{z_0, n} ||_\fre^{1/n} \exp [g_\fre(z)]
\end{align}
Moreover, by assumption, for $n$ sufficiently large all the zeros of $R_{z_0, n}$ lie is $K$. Thus, the function
\begin{align}
  h_n(z) = \frac{1}{n} \log || R_{z_0, n} ||_\fre + g_\fre(z) - \frac{1}{n} \log | R_{z_0, n}(z) |
\end{align}
is non-negative and harmonic on $\bbC\bs K$.  Since $h_n(z_0)\to 0$ by part (a), Harnack's inequality implies that $h_n\to 0$ uniformly on compact subsets of $\bbC\bs K$.
\end{proof}

\section{Alternation Theorem and Consequences} \lb{s3}

Recall the following definition and theorem of Borel \cite{Borel} and Markov \cite{Markov} critical for the understanding of Chebyshev polynomials on subsets of $\bbR$.

\begin{definition} Let $\fre \subset \bbR$ be compact.  We say that $P_n$, a degree $n$ polynomial, has an \textit{alternating set} in $\fre$ if there exists $n+1$ points, $\{x_j\}^{n+1}_{j=1} \subset \fre$, with $x_1 < x_2 < \ldots < x_{n+1}$ so that
\begin{equation} \lb{3.1}
   P_n(x_j) = (-1)^{n+1-j} \norm{P_n}_\fre,\quad j=1,\ldots,n+1
\end{equation}
\end{definition}

\begin{theorem} [The Alternation Theorem for Chebyshev Polynomials] \lb{T3.1}
Let $\fre \subset \bbR$ be compact.  The Chebyshev polynomial of degree $n$ for $\fre$ has an alternating set in $\fre$.  Conversely, any monic polynomial with an alternating set in $\fre$ is the Chebyshev polynomial for $\fre$.
\end{theorem}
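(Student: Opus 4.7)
The plan is to prove the forward direction by a perturbation in the spirit of Theorem~\ref{T2.1}; uniqueness of the Chebyshev minimizer then follows via a midpoint argument, and the converse reduces to a short sign-counting step. For the forward direction, let $T_n$ be the Chebyshev polynomial, set $M=t_n$, and put $S_{\pm}=\{x\in\fre:T_n(x)=\pm M\}$. Both sets are nonempty: if, say, $S_-$ were empty then $\min_\fre T_n>-M$, and the monic polynomial $T_n-c$ would have strictly smaller norm for small $c>0$. Let $m$ be the maximum length of an alternating subsequence in $S_+\cup S_-$ and assume for contradiction $m\le n$. Using that $S_+$ and $S_-$ are disjoint closed (by continuity of $T_n$) subsets of the compact set $\fre$, group $S_+\cup S_-$ into $m$ consecutive same-sign blocks $B_1,\ldots,B_m$ with alternating signs $\sigma_1,-\sigma_1,\ldots$, and choose separators $y_1<\cdots<y_{m-1}$ with $y_i\in(\sup B_i,\inf B_{i+1})$. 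Set
\begin{equation*}
  Q(x)=\sigma_1\prod_{i=1}^{m-1}(y_i-x),
\end{equation*}
a polynomial of degree $m-1\le n-1$ whose sign on the strip $(y_{i-1},y_i)$ is exactly $\sigma_i$, matching $\sgn T_n$ on $B_i$. By continuity there is an open neighborhood $U$ of $S_+\cup S_-$ in $\fre$ on which $\sgn T_n=\sgn Q$ and $|Q|\ge q_0>0$, while $|T_n|\le M-\eta$ on $\fre\setminus U$ for some $\eta>0$ by compactness. A routine estimate (splitting into $U$ and $\fre\setminus U$ and using $|T_n-\veps Q|=\bigl||T_n|-\veps|Q|\bigr|$ on $U$) then yields $\norm{T_n-\veps Q}_\fre<M$ for all sufficiently small $\veps>0$, contradicting the minimality of the monic degree-$n$ polynomial $T_n-\veps Q$ and forcing $m\ge n+1$.

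For uniqueness and the converse, suppose first that $T_n'$ is another monic minimizer. Then $(T_n+T_n')/2$ is one too, so by the forward direction it has an alternating set $\{x_j\}_{j=1}^{n+1}$. At each $x_j$, the identity $|T_n(x_j)+T_n'(x_j)|/2=M$ together with $|T_n(x_j)|,|T_n'(x_j)|\le M$ forces $T_n(x_j)=T_n'(x_j)$, so $T_n-T_n'$ has $n+1$ zeros but degree $\le n-1$, whence $T_n=T_n'$. For the converse claim of the theorem, let $P$ be monic of degree $n$ with alternating set $\{x_j\}_{j=1}^{n+1}$ in $\fre$. If some monic $\tilde P$ of degree $n$ had $\norm{\tilde P}_\fre<\norm{P}_\fre$, then at each $x_j$ we would have $|\tilde P(x_j)|<\norm{P}_\fre=|P(x_j)|$, so $(P-\tilde P)(x_j)$ would inherit the alternating sign of $P(x_j)=(-1)^{n+1-j}\norm{P}_\fre$, giving $n$ zeros of $P-\tilde P$ in $[x_1,x_{n+1}]$ and contradicting $\deg(P-\tilde P)\le n-1$. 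Hence $P$ is a monic minimizer, and by uniqueness $P=T_n$.

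The main technical obstacle is the block/separator construction in the forward direction. The extreme sets $S_\pm$ may be infinite or even uncountable (cf.\ Example~\ref{E2.3}), so one must carefully use continuity of $T_n$ to deduce that $S_+$ and $S_-$ are disjoint closed subsets of $\bbR$ with a strictly positive gap between $\sup B_i$ and $\inf B_{i+1}$, which is what allows the separator $y_i$ to be inserted and hence $Q$ to be defined. Once this geometric skeleton is in place, the perturbation estimate and the sign-counting steps for uniqueness and the converse are standard.
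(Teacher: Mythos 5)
The paper itself does not prove Theorem~\ref{T3.1}; it refers to \cite{CSZ1} for the proof. Your argument is correct and is essentially the Chebyshev specialization of the argument the paper gives for the residual analogue, Theorem~\ref{T3.2}: existence of an alternating set is obtained by the perturbation $T_n\mapsto T_n-\eps Q$ with $Q$ built to match signs on the extreme blocks (compare the polynomial $Q_0$ ``obtained by putting zeros in the right places'' in the paper's proof of Theorem~\ref{T3.2}), and the converse is the same sign-counting applied to $P-\tilde P$, now without the factor $(x-x_0)$ that the residual problem requires. The ordering forward direction $\Rightarrow$ uniqueness $\Rightarrow$ converse is sound and avoids circularity. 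One small remark: in the real Chebyshev setting the extreme sets $S_\pm$ are automatically finite, since $S_+\cup S_-$ lies in the zero set of the degree-$2n$ polynomial $T_n^2-t_n^2$ and so has at most $2n$ points; the concern you raise (citing Example~\ref{E2.3}, which is a complex example) does not actually arise here, so your closed-set/compactness bookkeeping for the block structure, while perfectly correct, is more elaborate than needed.
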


For a proof, see \cite{CSZ1}. The analog for residual polynomials is due to Achieser \cite{Ach}.

\begin{definition} Let $\fre \subset \bbR$ be compact and $x_0\in\bbR\bs\fre$.  We say that $P_n$, a degree at most $n$ polynomial, has an \textit{$x_0$-alternating set} in $\fre$ if there exists $n+1$ points, $\{x_j\}^{n+1}_{j=1} \subset \fre$, with $x_1 < x_2 < \ldots < x_k<x_0<x_{k+1}<\ldots<x_{n+1}$ for some $k\in\{0,1,\ldots, n+1\}$ so that
\begin{equation} \lb{3.2}
   P_n(x_j) = (-1)^{k+1-j} \sgn(x_j-x_0) \norm{P_n}_\fre, \quad j=1,\ldots,n+1
\end{equation}
\end{definition}

\begin{remark} $k=0$ (resp. $k=n+1$) means that $x_0<x_1$ (resp. $x_{n+1}<x_0$) and, in that case (up to a possible sign change), \eqref{3.2} is the same as \eqref{3.1}.
\end{remark}

\begin{theorem} [The Alternation Theorem for Residual Polynomials~\cite{Ach}] \lb{T3.2}
Let $\fre \subset \bbR$ be compact and $x_0\in\bbR\bs\fre$.  The residual polynomial, $R_{x_0,n}$, of degree at most $n$ for $\fre$ has an $x_0$-alternating set in $\fre$.  Conversely, any polynomial, $P$, with an $x_0$-alternating set in $\fre$ and with $P(x_0)=1$ is the $R_{x_0,n}$ polynomial for $\fre$.
\end{theorem}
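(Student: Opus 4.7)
The plan is to argue both directions by standard perturbation/zero-counting arguments, converting the $x_0$-alternation structure into ordinary Chebyshev-type alternation by dividing out $x - x_0$. Throughout, let $M = \norm{R_{x_0,n}}_\fre$ and $E^* = \{x \in \fre : |R_{x_0,n}(x)| = M\}$.

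For the direct part I argue by contradiction. Suppose $R := R_{x_0,n}$ has no $x_0$-alternating set of length $n+1$. Define $\tau : E^* \to \{\pm 1\}$ by $\tau(x) := \sgn R(x) \cdot \sgn(x - x_0)$. Since $R$ is continuous and nonzero on $E^*$ and $x_0 \notin \fre$, $\tau$ is continuous on $E^*$, so $\tau^{-1}(\pm 1)$ are disjoint compact subsets of $\bbR$ separated by a positive distance. Hence $E^*$ decomposes into finitely many $\tau$-blocks $B_1 < B_2 < \cdots < B_m$, linearly ordered along $\bbR$, with $\tau$ constant on each block and strictly alternating across consecutive blocks. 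Picking one representative from each block produces an $x_0$-alternating set of length $m$, so by hypothesis $m \le n$. Choose separating points $z_i \in (\max B_i, \min B_{i+1})$ for $i = 1, \dots, m-1$, and set
\begin{equation}
Q(x) := \epsilon\,(x - x_0) \prod_{i=1}^{m-1}(x - z_i),
\end{equation}
with the global sign $\epsilon \in \{\pm 1\}$ chosen so that $\sgn Q(x) = \sgn R(x)$ on $E^*$ (a direct parity count across the blocks shows this is always achievable). Then $\deg Q \le m \le n$ and $Q(x_0) = 0$. Since $R Q > 0$ on $E^*$ and hence on an open neighborhood $U$ of $E^*$ in $\fre$, while $|R|$ is strictly bounded below $M$ on $\fre \setminus U$, we get $\norm{R - \delta Q}_\fre < M$ for all sufficiently small $\delta > 0$. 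As $(R - \delta Q)(x_0) = 1$ and $\deg(R - \delta Q) \le n$, this contradicts the minimality of $R_{x_0,n}$.

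For the converse, let $P$ be of degree $\le n$ with $P(x_0) = 1$ and an $x_0$-alternating set $x_1 < \cdots < x_{n+1}$, and set $M = \norm{P}_\fre$. I claim $P$ is a norm minimizer; then $P = R_{x_0,n}$ by Theorem~\ref{T2.1}(b). Suppose instead some $Q$ of degree $\le n$ with $Q(x_0) = 1$ has $\norm{Q}_\fre < M$. Then $|Q(x_j)| < M = |P(x_j)|$ forces $\sgn(P - Q)(x_j) = \sgn P(x_j)$, and since $(P - Q)(x_0) = 0$, the polynomial $S(x) := (P - Q)(x)/(x - x_0)$ has degree $\le n - 1$ and satisfies, by \eqref{3.2},
\begin{equation}
\sgn S(x_j) = \sgn P(x_j) \cdot \sgn(x_j - x_0) = (-1)^{k+1-j}.
\end{equation}
Thus $S$ strictly alternates in sign at the $n+1$ points $x_1 < \cdots < x_{n+1}$, giving at least $n$ real zeros, so $S \equiv 0$; but then $P = Q$, contradicting $\norm{Q}_\fre < M$.

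The main technical point is the finiteness of the $\tau$-block decomposition in the direct part when $E^*$ is infinite: it relies crucially on the continuity of $\tau$ (and thus on $x_0 \notin \fre$) to separate $\tau^{-1}(+1)$ and $\tau^{-1}(-1)$ by a positive distance, which in turn produces only finitely many blocks with strictly positive gaps in which to place the auxiliary points $z_i$.
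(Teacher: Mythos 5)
Your proof is correct and follows essentially the same two arguments as the paper: the converse direction uses $(P-Q)/(x-x_0)$ and a zero-count to force $P\equiv R_{x_0,n}$, while the direct direction constructs a perturbing polynomial $Q$ vanishing at $x_0$ whose sign matches that of $R$ on the extreme set, exactly as the paper does with $Q=(x-x_0)Q_0$. The only minor imprecision is in the neighborhood argument: to conclude $\norm{R-\delta Q}_\fre<M$ you should shrink $U$ so that $RQ$ is bounded below by a positive constant on $U$ (not merely positive), since otherwise the $\delta^2Q^2$ term in $|R-\delta Q|^2=R^2-2\delta RQ+\delta^2Q^2$ could dominate near points where $RQ$ is small; this is a one-line fix using compactness of $E^*$ and continuity of $RQ$.
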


\begin{proof} Suppose that $P$ is a polynomial with $P(x_0)=1$ and so that \eqref{3.2} holds.  If $P$ is not a norm minimizer, then $\norm{R_{x_0,n}}_\fre<\norm{P}_\fre$.  Consider the polynomial $Q(x)=[P(x)-R_{x_0,n}(x)]/(x-x_0)$.  It has degree at most $n-1$ and alternating signs at $x_1,\dots,x_{n+1}$, hence a zero in each of the intervals $(x_j,x_{j+1})$, $j=1,\dots,n$.  It follows that $Q$ is identically zero, a contradiction.  Thus $P$ is a norm minimizer.

Conversely, suppose $P\equiv R_{x_0,n}$ and $\sgn(x-x_0)P(x)$ has at most $n-1$ sign changes on the set of extreme points of $P$.  Then, by putting zeros in the right places, there exists a polynomial $Q_0$ of degree at most $n-1$ such that $\sgn(Q_0(x))=\sgn(P(x)/(x-x_0))$ for each extreme points of $P$.  Thus the polynomial $Q(x)=(x-x_0)Q_0(x)$ has degree at most $n$, $(P-\varepsilon Q)(x_0)=1$, and $\norm{P-\varepsilon Q}_\fre<\norm{P}_\fre$ for sufficiently small $\varepsilon>0$ contradicting the fact that $P$ is a norm minimizer.  Therefore an alternating set exists.
\end{proof}

This shows that $R_{x_0,n}$ has at least $n+1$ extreme points so proving uniqueness again.  Uniqueness (and consideration of $\ol{R_{x_0,n}(\bar{x})}$) shows that all the coefficients of $R_{x_0,n}$ are real.  Besides this, the Alternation Theorem has lots of immediate corollaries, many of which are so important that we will call them theorems. For the rest of this section, we will suppose that $\fre \subset \bbR$ is compact and $x_0\in\bbR\bs\fre$.  We will let $x_\pm$ be the $\sup$/$\inf$ of $\fre$ and $k$ the integer specified in the definition of $x_0$-alternating set.

\begin{theorem} \lb{T3.3} The dual residual polynomials $\wti{R}_{x_0,n}$ do not change as $x_0$ varies through the same connected component of $\bbR\bs\fre$ and are equal (up to a constant) to the Chebyshev  polynomials (of the same order) if $x_0\in (-\infty,x_-)\cup (x_+,\infty)$.  Equivalently, if $x_0, y_0$ are in the same connected component of $\bbR\bs\fre$, we have that
\begin{equation}\label{3.3}
  R_{y_0,n}(x) = R_{x_0,n}(x)/R_{x_0,n}(y_0)
\end{equation}
\end{theorem}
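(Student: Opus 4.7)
The plan is to reduce the statement to the Alternation Theorem (Theorem \ref{T3.2}) by observing that the $x_0$-alternation condition is essentially insensitive to moving $x_0$ within a single gap of $\fre$. Let $\{x_j\}_{j=1}^{n+1}$ denote the $x_0$-alternating set of $R_{x_0,n}$ with associated index $k$, and let $y_0$ lie in the same connected component of $\bbR\bs\fre$ as $x_0$. Since no point of $\fre$ separates $x_0$ from $y_0$, one has $\sgn(x_j - x_0) = \sgn(x_j - y_0)$ for every $j$, and the index $k$ relative to $y_0$ coincides with the one for $x_0$. Granting $R_{x_0,n}(y_0) > 0$, the polynomial $Q(x) := R_{x_0,n}(x)/R_{x_0,n}(y_0)$ then satisfies $Q(y_0) = 1$, $\norm{Q}_\fre = \norm{R_{x_0,n}}_\fre/R_{x_0,n}(y_0)$, and $Q(x_j) = (-1)^{k+1-j}\sgn(x_j - y_0)\norm{Q}_\fre$, which is precisely the $y_0$-alternating condition. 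The converse direction of Theorem \ref{T3.2} forces $Q = R_{y_0,n}$, giving \eqref{3.3}; dividing through by $\norm{R_{y_0,n}}_\fre$ yields $\wti{R}_{y_0,n} = \wti{R}_{x_0,n}$.

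The technical heart is therefore to establish $R_{x_0,n}(y_0) > 0$, and I would do so by counting zeros forced by the alternation pattern. For each consecutive pair $(x_j, x_{j+1})$ with $j \neq k$, both endpoints lie on the same side of $x_0$, so the formula assigns opposite signs to $R_{x_0,n}(x_j)$ and $R_{x_0,n}(x_{j+1})$, forcing a real zero in $(x_j, x_{j+1})$. This accounts for $n-1$ real zeros, all of them lying outside $(x_k, x_{k+1})$. On the straddling pair, however, both $R_{x_0,n}(x_k)$ and $R_{x_0,n}(x_{k+1})$ evaluate to $+\norm{R_{x_0,n}}_\fre$, so $R_{x_0,n}$ has constant sign on $(x_k, x_{k+1})$, and the normalization $R_{x_0,n}(x_0) = 1$ fixes that sign as positive. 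Since $x_k, x_{k+1} \in \fre$, the gap of $\fre$ containing $x_0$ sits inside $(x_k, x_{k+1})$, so $R_{x_0,n}(y_0) > 0$ as needed.

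For the Chebyshev identification in the unbounded components, I would specialize to $x_0 \in (x_+, \infty)$ (the case $x_0 < x_-$ being symmetric). Then $k = n+1$ and $\sgn(x_j - x_0) = -1$ for every $j$, so the $x_0$-alternation condition collapses to the classical Chebyshev alternation $R_{x_0,n}(x_j) = (-1)^{n+1-j}\norm{R_{x_0,n}}_\fre$ of Theorem \ref{T3.1}. Now every consecutive pair $(x_j, x_{j+1})$ is a sign-change pair, yielding $n$ real zeros, all located in $(-\infty, x_0)$; hence $\deg R_{x_0,n} = n$ and its leading coefficient is positive thanks to $R_{x_0,n}(x_0) = 1$. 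Rescaling $R_{x_0,n}$ to be monic and invoking Theorem \ref{T3.1} identifies it with $T_n^{(\fre)}$, so $\wti{R}_{x_0,n}$ is a positive constant multiple of $T_n^{(\fre)}$. The main obstacle in the whole argument is the zero-counting step that delivers the positivity of $R_{x_0,n}(y_0)$; once it is in hand, everything reduces to substitution into the alternation identities.
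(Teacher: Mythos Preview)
Your proof is correct and follows essentially the same route as the paper: reduce \eqref{3.3} to the converse direction of the Alternation Theorem (Theorem~\ref{T3.2}) once one knows $R_{x_0,n}(y_0)>0$, and obtain that positivity by the zero-counting argument (the paper packages the latter as Theorem~\ref{T3.4}(a), while you carry it out inline). The identification with the Chebyshev polynomial in the unbounded components via the collapse of the $x_0$-alternation condition to ordinary alternation is likewise the same as in the paper.
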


\begin{proof} As we noted already, if $x_0\in\bbR\bs [x_-,x_+]$, then an $x_0$-alternating set is an alternating set so, by Theorem~\ref{T3.1}, up to a constant, $R_{x_0,n}$ is the ordinary Chebyshev polynomial.  It follows that $R_{x_0,n}$ is non-vanishing outside $[x_-,x_+]$ so \eqref{3.3} holds.  In (a) of Theorem~\ref{T3.4}, we'll prove (without using this theorem) that if $x_0$ is in a bounded component of $\bbR\bs\fre$, then $R_{x_0,n}$ is non-vanishing on that component so for $y_0$ in the same component, the polynomial on the right side of \eqref{3.3} has a $y_0$-alternating set and is normalized properly and so is $R_{y_0,n}$.  It is easy to see that \eqref{3.3} proves equality of the dual residual polynomials.
\end{proof}

As we noted, if $k=0$ or $k=n+1$ then $R_{x_0,n}$ is a constant multiple of the ordinary Chebyshev polynomial whose structure we know. In particular, for the ordinary Chebyshev polynomial the $\inf$ and $\sup$ of $\fre$ are the alternation points $x_1$ and $x_{n+1}$, respectively, and hence $k=0$ (resp. $k=n+1$) happens if and only if $x_0\in (-\infty,x_-)$ (resp. $x_0\in(x_+,\infty))$. Henceforth, we will suppose that $1\le k\le n$ and that $x_0$ lies in a bounded component, $(\alpha,\beta)$, of $\bbR\bs\fre$.

\begin{theorem} \lb{T3.4}
\begin{SL}
\item[\rm{(a)}] $R_{x_0,n}(x)$ has at least $n-1$ zeros in $(x_-,x_k)\cup(x_{k+1},x_+)$ and no zeros in $[x_k,x_{k+1}]$.

\item[\rm{(b)}] $d_n\equiv \deg(R_{x_0,n})$ is either $n$ or $n-1$.

\item[\rm{(c)}] All zeros of $R_{x_0,n}(x)$ are real and simple.
\end{SL}
\end{theorem}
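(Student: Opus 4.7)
\medskip

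\noindent\textbf{Plan of proof.} All three parts flow from the $x_0$-alternating property and careful zero counting. The standing assumption gives us alternating points $x_1<\cdots<x_k<x_0<x_{k+1}<\cdots<x_{n+1}$ with values prescribed by \eqref{3.2}. The first step is to read off from \eqref{3.2} that, within the left block $\{x_1,\dots,x_k\}$ (where $\sgn(x_j-x_0)=-1$), consecutive values of $R_{x_0,n}$ have opposite signs, and similarly within the right block $\{x_{k+1},\dots,x_{n+1}\}$ (where $\sgn(x_j-x_0)=+1$). By the intermediate value theorem this yields at least $k-1$ zeros in $(x_1,x_k)$ and at least $n-k$ zeros in $(x_{k+1},x_{n+1})$, for a total of at least $n-1$ distinct zeros in $(x_-,x_k)\cup(x_{k+1},x_+)$, proving the first half of (a).

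For the second half of (a), evaluating \eqref{3.2} at $j=k$ and $j=k+1$ gives $R_{x_0,n}(x_k)=R_{x_0,n}(x_{k+1})=+\norm{R_{x_0,n}}_\fre>0$, so neither endpoint is a zero. If $R_{x_0,n}$ had a zero inside $(x_k,x_{k+1})$, since $R_{x_0,n}$ is positive at both endpoints of that interval, the multiplicities of zeros there would sum to an even number $\ge 2$; combined with the $n-1$ zeros already found, this would yield at least $n+1$ zeros counted with multiplicity, contradicting $\deg(R_{x_0,n})\le n$. Part (b) is then immediate: the polynomial is not identically zero (it equals $1$ at $x_0$), has $\le n$ zeros with multiplicity, and we have exhibited $n-1$ distinct real zeros, so $d_n\in\{n-1,n\}$.

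For (c), I would sharpen the counting. Each of the $n-1$ zeros located in (a) lies between two consecutive alternation points where $R_{x_0,n}$ takes values of opposite signs, so at such a zero $R_{x_0,n}$ changes sign; hence its multiplicity is odd. If any one of these multiplicities were $\ge 3$, the total multiplicity count would be at least $3+(n-2)=n+1$, exceeding $d_n\le n$ — a contradiction. Thus all $n-1$ of these zeros are simple. If $d_n=n-1$, this accounts for all zeros and the proof is complete. If $d_n=n$, the remaining zero cannot lie in $[x_k,x_{k+1}]$ by (a) and cannot coincide with any of the $n-1$ already-found zeros (that would force an even total multiplicity there, contradicting the odd-multiplicity argument just given); moreover, since $R_{x_0,n}$ has real coefficients (a consequence of uniqueness already noted after Theorem~3.2), any non-real zero would come with its conjugate, giving at least two extra zeros, impossible. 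Hence the remaining zero is a simple real zero, and all zeros of $R_{x_0,n}$ are real and simple.

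The substantive step is the non-existence of zeros in $[x_k,x_{k+1}]$ in (a), which is what forces $d_n\ge n-1$ rather than merely $\ge n-1$ with possible collisions; the rest is bookkeeping with multiplicities and the parity constraint from complex-conjugate pairs.
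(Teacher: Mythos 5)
Your proof is correct in substance and follows essentially the same route as the paper's: locate $n-1$ sign-change zeros via the alternation property, use parity to rule out zeros in $(x_k,x_{k+1})$, and finish with a multiplicity count. I would only flag one imprecision in your handling of (c). You argue that ``each of the $n-1$ zeros located in (a) $\ldots$ changes sign; hence its multiplicity is odd,'' and then bound $3 + (n-2) = n+1$. But the zero produced by the intermediate value theorem in a given interval $(x_j,x_{j+1})$ need not itself be a sign-change zero, and more importantly, an interval could contain several zeros (say a simple sign-change zero plus a double zero elsewhere), in which case the multiplicity of \emph{your} located zero stays $1$ while the interval still contributes $3$ to the total. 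Your later argument that the possible $n$-th zero ``cannot coincide with any of the $n-1$ already-found zeros'' similarly leaves open the possibility of an extra zero at a \emph{different} point of the same interval. The fix, which is what the paper does, is to account for \emph{total} multiplicity per interval: each of the $n-1$ intervals across which $R_{x_0,n}$ changes sign carries an odd total multiplicity $\ge 1$, so if any interval carried total $\ge 3$ the grand total would be $\ge n+1 > d_n$. Hence each interval carries exactly one simple zero, and none of the remaining mass can fall back into those intervals. With that one rephrasing — total multiplicity per interval rather than multiplicity of the located zero — your argument matches the paper's proof point for point.
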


\begin{proof} (a) There are $k-1$ disjoint intervals, $(x_1,x_2),\ldots,(x_{k-1},x_k)$
in $(x_-,x_k)$, each with an odd number of zeros (counting multiplicity) and similarly, $n-k$ such intervals in $(x_{k+1},x_+)$.  Furthermore, there is an even number of zeros (counting multiplicity and including $0$ as even) in $(x_k,x_{k+1})$.  Since there are $n-1$ odd zero intervals and at most $n$ zeros, each odd number interval must have exactly one zero, so simple, and $(x_k,x_{k+1})$ cannot have any zeros.

(b) We have proven that $R_{x_0,n}$ has at least $n-1$ zeros so its degree must be at least $n-1$ and is, by definition, at most $n$.

(c) We have proven in (a) that $R_{x_0,n}$ has at least $n-1$ simple real zeros.  If it is of degree $n$, its last zero must also be simple and, since $R_{x_0, n}$ is real and non-real zeros come in complex conjugate pairs, the $n$-th zero must also be real.
\end{proof}

\begin{theorem} \lb{T3.5}
Suppose that $1<k<n$.
\begin{SL}
\item[\rm{(a)}] The derivative, $R_{x_0,n}'(x)$, has at least $k-2$ zeros (counting multiplicity) in $(x_-,x_k)$, at least $n-k-1$ zeros in $(x_{k+1},x_+)$, and at least one zero in $[x_k,x_{k+1}]$.

\item[\rm{(b)}] All zeros of $R_{x_0,n}'(x)$ are real and simple.

\item[\rm{(c)}] $R_{x_0,n}(x) > r_{x_0,n}$ on $(x_k,x_{k+1})$.

\item[\rm{(d)}] $x_k=\alpha$ and $x_{k+1}=\beta$.

\item[\rm{(e)}] Either $x_1=x_-$ or $x_{n+1}=x_+$ (or both).
\end{SL}
\end{theorem}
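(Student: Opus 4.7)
The plan is to combine the $x_0$-alternating structure from Theorem~\ref{T3.2} with Rolle's theorem and the properties already established in Theorem~\ref{T3.4}. I handle (a)--(e) in order, using earlier parts to justify later ones; the main obstacle is (e), where I need to extract a sign-change count from the band structure of $\fre_n$.

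For (a), on $(x_-,x_k)$ the values $R_{x_0,n}(x_j)$ for $j=1,\dots,k$ alternate between $\pm r_{x_0,n}$, so the intermediate value theorem yields a zero of $R_{x_0,n}$ in each $(x_j,x_{j+1})$, giving $k-1$ real zeros in $(x_1,x_k)$. Rolle's theorem between consecutive such zeros then provides $k-2$ zeros of $R_{x_0,n}'$ there. The interval $(x_{k+1},x_+)$ is symmetric. Finally, $R_{x_0,n}(x_k)=R_{x_0,n}(x_{k+1})=r_{x_0,n}$, so Rolle's theorem applied directly gives at least one zero of $R_{x_0,n}'$ in $(x_k,x_{k+1})$. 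For (b), Theorem~\ref{T3.4}(c) gives $d_n$ simple real zeros of $R_{x_0,n}$, so Rolle's theorem furnishes at least $d_n-1$ real zeros of $R_{x_0,n}'$; since $\deg R_{x_0,n}'=d_n-1$, these are all of them and must be simple (a multiple zero would exceed the degree).

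For (c), Theorem~\ref{T3.4}(a) says $R_{x_0,n}$ has no zeros in $[x_k,x_{k+1}]$, and $R_{x_0,n}(x_k)=r_{x_0,n}>0$ forces $R_{x_0,n}>0$ throughout. By (a)--(b), $R_{x_0,n}'$ has exactly one simple zero in $(x_k,x_{k+1})$, so $R_{x_0,n}$ is unimodal there. Since $r_{x_0,n}<1$ (else $P\equiv1$ would be the minimizer, forcing $R_{x_0,n}\equiv1$, incompatible with $1<k<n$) and $R_{x_0,n}(x_0)=1>r_{x_0,n}$, the unique critical point is a local maximum exceeding $r_{x_0,n}$, so $R_{x_0,n}>r_{x_0,n}$ on the open interval. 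For (d), $\alpha,\beta\in\fre$ by closedness; if $x_k<\alpha$ then $\alpha\in(x_k,x_{k+1})$ and (c) forces $R_{x_0,n}(\alpha)>r_{x_0,n}$, contradicting $|R_{x_0,n}(\alpha)|\le r_{x_0,n}$. Hence $x_k=\alpha$; symmetrically $x_{k+1}=\beta$.

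For (e), I argue by contradiction: suppose both $x_1>x_-$ and $x_{n+1}<x_+$. Writing $R_{x_0,n}^2-r_{x_0,n}^2=C\prod_{i=1}^{d_n}(x-a_i)(x-b_i)$ with the $\{a_i,b_i\}$ the band endpoints of $\fre_n=R_{x_0,n}^{-1}([-r_{x_0,n},r_{x_0,n}])$, the polynomials $R_{x_0,n}\mp r_{x_0,n}$ each have exactly $d_n$ simple real zeros located at the band endpoints. An interior critical point $c$ of $R_{x_0,n}$ in a band would, by the monotonicity argument, force $R_{x_0,n}(c)=\pm r_{x_0,n}$ with $R_{x_0,n}'(c)=0$, producing a double zero and violating this simplicity. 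Thus $R_{x_0,n}$ is strictly monotonic on each of its $d_n$ bands, running between $+r_{x_0,n}$ and $-r_{x_0,n}$; its extreme points on $\fre$ are exactly the band endpoints lying in $\fre$, and by continuity the two endpoints flanking each inter-band gap carry the same sign of $R_{x_0,n}$. The key sign count: the full band-endpoint sequence $a_1,b_1,\dots,a_{d_n},b_{d_n}$ has exactly $d_n$ sign changes (one per band, at each $a_i\to b_i$, and none between bands). Under the contradiction hypothesis, $x_-$ (resp.\ $x_+$) lies strictly in the interior of its band, so $a_1<x_-$ and $b_{d_n}>x_+$ are not in $\fre$; every admissible alternation set is then a subsequence of $\{b_1,a_2,b_2,\dots,a_{d_n}\}$, which has only $d_n-2$ sign changes. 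But the $x_0$-alternating pattern \eqref{3.2} has exactly $n-1$ sign changes, and $n-1>d_n-2$ since $d_n\le n$, yielding the contradiction. The hardest step is the monotonicity claim; it is dispatched cleanly by the double-zero argument above, though I expect one must verify the parity of the sign convention (which inter-band gap is ``upper'' vs.\ ``lower'') is consistent with $R_{x_0,n}(x_0)=1>r_{x_0,n}$ sitting in the gap $(\alpha,\beta)$.
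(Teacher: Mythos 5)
Parts (a), (b), and (d) are correct and align with the paper's proof (your Rolle-between-zeros route for (a)--(b) is a harmless variant of the paper's local-extrema count). For (c), the claim that $R_{x_0,n}'$ has \emph{exactly} one zero in $(x_k,x_{k+1})$ needs one more line: by Theorem~\ref{T3.4}(a) that interval lies between two consecutive zeros of $R_{x_0,n}$, and your argument in (b) shows $R_{x_0,n}'$ has exactly one zero between any two consecutive zeros of $R_{x_0,n}$. With that supplied, (c) is fine; the paper instead reaches the same conclusion by a three-critical-points count.

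The genuine gap is in (e), in the lemma that $R_{x_0,n}$ is strictly monotonic on each band of $\fre_n$. You argue that an interior critical point $c$ of a band would have $R_{x_0,n}(c)=\pm r_{x_0,n}$, hence a double zero of $R_{x_0,n}^2-r_{x_0,n}^2$. That implication is false: at a critical point in the interior of a band one has $|R_{x_0,n}(c)|<r_{x_0,n}$, not equality, so no double zero is produced, and the simplicity of the band-endpoint zeros that you invoke is not something you have established (it can in fact fail when bands touch). A correct argument exists but is different: by (b), $R_{x_0,n}'$ has exactly $d_n-1$ simple real zeros; since $R_{x_0,n}$ takes the common value $\pm r_{x_0,n}$ at the two ends of each inter-band gap and has modulus $>r_{x_0,n}$ in between, Rolle forces at least one critical point in each of the $d_n-1$ gaps (or at a touching point), which exhausts all critical points and leaves none in any band interior. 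This is in effect the content of Proposition~\ref{P3.7}(a), and it is what the paper's deferred proof of (e) via Proposition~\ref{P3.7}(b) relies on. Once monotonicity is in place, your sign-change count on the band-endpoint sequence is the same combinatorial mechanism the paper uses and is correct. One small cleanup: from $x_1>x_-$ you cannot infer $a_1<x_-$ (that would require $a_1\notin\fre$, which you have not shown), but you also do not need it---$x_1>x_-\ge a_1$ already gives $x_1>a_1$, which is all that the count uses, and symmetrically for $x_{n+1}$.
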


\begin{proof} (a) We start with the assertion that $R_{x_0,n}'$ has at least $n-k-1$ zeros in $(x_{k+1},x_+)$.  There is nothing to prove unless $n\ge k+2$.  Since $R_{x_0,n}(x_{k+1})=R_{x_0,n}(x_{k+3})=r_{x_0,n}$ and $R_{x_0,n}(x_{k+2})=-r_{x_0,n}$, $R_{x_0,n}$ must have at least one minimum point in $(x_{k+1},x_{k+3})$ and so a point where $R_{x_0,n}'$ vanishes.  If $n=k+2$, we have the one required zero.  If $n>k+2$, we can find a maximum in $(x_{k+2},x_{k+4})$.  Proceeding in this way we get the required $n-k-1$ zeros of $R_{x_0,n}'$ and similarly, we get at least $k-2$ zeros in $(x_-,x_k)$.  This accounts for at least $n-3$ zeros out of a maximum possible $n-1$ (which is the maximum degree of $R_{x_0,n}'$).

We have that $R_{x_0,n}(x_k)=R_{x_0,n}(x_{k+1})=r_{x_0,n} < 1=R_{x_0,n}(x_0)$ so $R_{x_0,n}$ has a maximum point in $(x_k,x_{k+1})$ and so a point where $R_{x_0,n}'$ must vanish.

(b) All the zeros of $R_{x_0,n}'$ found in (a) are local maxima or local minima and so points where $R_{x_0,n}'$ has odd order zeros. Since there are $n-2$ such points, none can have a zero of order $3$ or more and thus all must be simple.  The remaining zero must also be simple and, by reality of $R_{x_0,n}'$, real.

(c) The same argument (from the proof of Theorem~\ref{T3.5}~(a)) that proved $R_{x_0,n}$ has no zeros in $(x_k,x_{k+1})$, shows it cannot take any value in $(-r_{x_0,n},r_{x_0,n})$.  Since that interval is connected and $R_{x_0,n}(x_0)=1>r_{x_0,n}$, we conclude that on that interval we have that $R_{x_0,n}(x) \ge r_{x_0,n}$.  If there were a point, $y_0$, in the open interval with $R_{x_0,n}(y_0)=r_{x_0,n}$, then on the interval, $R_{x_0,n}$ would have at least one local minimum (at $y_0$) and two local maxima (by Rolle's theorem), so three zeros of $R_{x_0,n}'$ in the interval which cannot happen because we've found $n-3$ zeros outside that interval out of at most $n-1$ in total.

(d) Since $\alpha\in [x_k,x_0)$ and $R_{x_0,n}(\alpha)\le r_{x_0,n}$ (because $\alpha\in\fre))$, (c) implies that $\alpha=x_k$.  Similarly, $\beta=x_{k+1}$.

(e) We will defer the proof of this to later. See the third remark after Proposition \ref{P3.7}.
\end{proof}

\begin{theorem} \lb{T3.6} If $\deg(R_{x_0,n})=n-1$, then
\begin{SL}
\item[\rm{(a)}] We have that $R_{x_0,n}(x)= T_{n-1}(x)/T_{n-1}(x_0)$, where $T_{n-1}$ is the Chebyshev polynomial for $\fre$ of degree $n-1$.

\item[\rm{(b)}] $R_{x_0,n-1}=R_{x_0,n}$.
\end{SL}
\end{theorem}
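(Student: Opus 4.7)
The plan is to leverage the $x_0$-alternating set provided by Theorem \ref{T3.2} and then apply the converse direction of the classical alternation theorem (Theorem \ref{T3.1}) to identify $R_{x_0,n}$ with a multiple of $T_{n-1}$.

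For part (a), I would first unpack the sign pattern in the $x_0$-alternating set $\{x_j\}_{j=1}^{n+1}$ guaranteed by Theorem \ref{T3.2}. A direct substitution of $\sgn(x_j - x_0)$ into \eqref{3.2} shows that $R_{x_0,n}(x_j)$ alternates in sign in $j$ everywhere except between $j=k$ and $j=k+1$, where both values equal $+\norm{R_{x_0,n}}_\fre$ (i.e., there is one ``repeat'' in the otherwise alternating sequence). In particular, deleting either $x_k$ or $x_{k+1}$ from the list yields a set of exactly $n$ points in $\fre$ on which $R_{x_0,n}$ takes values $\pm \norm{R_{x_0,n}}_\fre$ with strict sign alternation. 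Under the hypothesis $\deg(R_{x_0,n})=n-1$, this is precisely an alternating set in the sense of Theorem \ref{T3.1} for a degree $n-1$ polynomial. Letting $L$ denote the leading coefficient of $R_{x_0,n}$, the monic polynomial $R_{x_0,n}/L$ of degree $n-1$ has an alternating set, so by the converse part of Theorem \ref{T3.1} it must equal $T_{n-1}$. Thus $R_{x_0,n} = L\,T_{n-1}$, and evaluating at $x_0$ (noting $T_{n-1}(x_0)\ne 0$, since otherwise $R_{x_0,n}(x_0)=0\ne 1$) gives $L = 1/T_{n-1}(x_0)$, proving (a).

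For part (b), the argument is a short two-sided comparison using the uniqueness from Theorem \ref{T2.1}(b). Under the hypothesis, $R_{x_0,n}$ itself has degree at most $n-1$ and satisfies $R_{x_0,n}(x_0)=1$, so it is a legitimate trial polynomial in the degree-$(n-1)$ residual problem, yielding $r_{x_0,n-1}\le r_{x_0,n}$. The reverse inequality $r_{x_0,n}\le r_{x_0,n-1}$ is \eqref{1.3}. Hence $r_{x_0,n-1}=r_{x_0,n}$, and $R_{x_0,n}$ is a minimizer among degree $\le n-1$ polynomials with value $1$ at $x_0$. By the uniqueness statement in Theorem \ref{T2.1}(b) applied at degree $n-1$, this forces $R_{x_0,n-1} = R_{x_0,n}$.

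I do not anticipate any serious obstacle; the only subtle point is organizing the sign bookkeeping in \eqref{3.2} cleanly enough to make the ``drop one of $x_k, x_{k+1}$'' observation transparent, and noting that $T_{n-1}(x_0)\ne 0$ so that the normalization is well-defined.
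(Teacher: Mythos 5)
Your proof is correct. For part (a), your argument is essentially the same as the paper's: both drop one of the two adjacent alternating points $x_k, x_{k+1}$ (which carry the same sign $+\norm{R_{x_0,n}}_\fre$) to produce an ordinary alternating set of $n$ points, then invoke the converse direction of Theorem~\ref{T3.1} to identify the normalization of $R_{x_0,n}$ with $T_{n-1}$. (Both you and the paper leave implicit the small sign check that the leading coefficient $L$ of $R_{x_0,n}$ satisfies $\sgn(L)=(-1)^{n-k}$, so that $R_{x_0,n}/L$ has the alternating pattern \eqref{3.1} rather than its negative; this follows from the zero count in Theorem~\ref{T3.4}(a), since there are no zeros to the right of $x_{n+1}$.)

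For part (b), however, you take a genuinely different route. The paper again uses the alternation machinery: dropping $x_{n+1}$ leaves an $x_0$-alternating set of $n$ points, and since $R_{x_0,n}$ already has degree at most $n-1$, the converse of Theorem~\ref{T3.2} applied to the degree-$(n-1)$ problem identifies $R_{x_0,n}$ with $R_{x_0,n-1}$. Your argument instead bypasses alternation entirely: you observe that under the degree hypothesis $R_{x_0,n}$ is admissible in the degree-$(n-1)$ problem, so $r_{x_0,n-1}\le r_{x_0,n}$, and combine this with the monotonicity $r_{x_0,n}\le r_{x_0,n-1}$ from \eqref{1.3} to conclude $R_{x_0,n}$ is a minimizer at level $n-1$; uniqueness (Theorem~\ref{T2.1}(b)) then gives $R_{x_0,n-1}=R_{x_0,n}$. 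This is a cleaner and more elementary argument: it uses only the definition of the residual problem and general uniqueness, and would remain valid in settings where no alternation theorem is available, whereas the paper's argument is purely a corollary of the alternation theorem. Both are correct.
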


\begin{proof} (a) If we drop $x_{k+1}$, we get an ordinary alternating polynomial, so $R_{x_0,n}$ is a multiple of the Chebyshev polynomial $T_{n-1}$.

(b) If we drop $x_{n+1}$, we get a trial polynomial for the $R_{x_0,n-1}$ problem with an $x_0$-alternating set.
\end{proof}

Next, we turn to the idea of using $R_{x_0,n}$ to approximate $\fre$ with the spectra of periodic Jacobi matrices, an idea that was so useful in the Chebyshev case \cite{CSZ1}. Given $\fre\subset\bbR$ and $x_0\in\bbR\bs\fre$, we define the \emph{period-$d_n$ sets}
\begin{equation}\label{3.4}
  \fre_n^\circ=R_{x_0,n}^{-1}\bigl((-r_{x_0,n},r_{x_0,n})\bigr); \quad\; \fre_n=R_{x_0,n}^{-1}\bigl([-r_{x_0,n},r_{x_0,n}]\bigr)
\end{equation}
where we consider the inverses as maps from $\bbC$ to itself (so that, by the open mapping theorem, $\fre_n$ is the closure of $\fre_n^\circ$) although it will turn out the sets are subsets of $\bbR$.  We will denote the equilibrium measure, the Green's function, and the corresponding Blaschke-type function (normalized so that $B_n(x_0)>0$) of $\fre_n$ by $\rho_n$, $g_n$, and $B_n$, respectively.

\begin{proposition} \label{P3.7}
Let $\fre\subset\bbR$ and $x_0\in\bbR\bs\fre$.  Then
\begin{SL}
  \item[\rm{(a)}] The set $\fre_n^\circ$ is a subset of $\bbR$ and has $d_n$ connected components, each an interval of $\fre_n$-harmonic measure $1/d_n$.
  \item[\rm{(b)}] $\fre\subset\fre_n\subset\bbR$ and each gap of $\fre$, including the ``unbounded gap'' $(-\infty,x_-)\cup(x_+,\infty)$, intersects with at most one component of $\fre_n^\circ$.  The gap of $\fre$ containing $x_0$ does not intersect $\fre_n$.  If $d_n=n-1$ then also the unbounded gap does not intersect $\fre_n$.
  \item[\rm{(c)}] The equilibrium measure, $\rho_n$, of each gap of $\fre$ is at most $1/d_n$.
  \item[\rm{(d)}] In any component of $\bbR\bs\fre$, $R_{x_0,n}$ has at most one zero.
\end{SL}
\end{proposition}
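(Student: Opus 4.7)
The plan is to reduce all four parts to a single technical claim about $R := R_{x_0,n}$: every critical point $c$ of $R$ satisfies $|R(c)| \ge r := r_{x_0,n}$. Given that the zeros and critical points of $R$ are real and simple (Theorems~\ref{T3.4}(c) and \ref{T3.5}(b)), I would prove this by contradiction. Suppose some critical point $c$ has $t_0 := R(c) \in (-r, r)$; then $c$ is a double root of the degree $d_n$ polynomial $R - t_0$, and Theorem~\ref{T3.5}(c) forces $c \notin [x_k, x_{k+1}]$. Since $|t_0| < r$, the values $R(x_j) - t_0$ inherit the sign pattern of $R(x_j)$ at the $n+1$ alternation points, so $R - t_0$ has a root of odd multiplicity in each of the $n - 1$ sign-change intervals $(x_l, x_{l+1})$ with $l \ne k$. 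Together with the double root at $c$, this produces at least $n + 1$ roots counted with multiplicity, contradicting $\deg(R - t_0) = d_n \le n$.

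Given this claim, part (a) is a Chebyshev-style band analysis. The $d_n - 1$ critical values of $R$ alternate in sign (because $R$ changes sign across each simple zero) and all lie outside $(-r, r)$, so on each of the $d_n$ monotonicity intervals cut out by the critical points, $R$ passes through $(-r, r)$ exactly once. This yields $d_n$ disjoint real components of $\fre_n^\circ$; moreover $\fre_n^\circ \subset \bbR$ because the absence of critical values in $(-r, r)$ keeps the real-root count of $R - t$ constantly equal to $d_n$ for $t \in (-r, r)$. On each component $R$ is a degree-one conformal bijection onto $(-r, r)$, so pulling back the arcsine equilibrium measure of $[-r, r]$ and using uniqueness of the equilibrium measure of $\fre_n$ gives equilibrium mass $1/d_n$ per component.

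For part (b), the inclusions $\fre \subset \fre_n \subset \bbR$ follow, the first from $\norm{R}_\fre = r$ and the second from part (a) since $\fre_n = \overline{\fre_n^\circ}$; the gap $(x_k, x_{k+1})$ containing $x_0$ is disjoint from $\fre_n^\circ$ by Theorem~\ref{T3.5}(c)--(d); for any other gap $G$, two components of $\fre_n^\circ$ meeting $G$ would force the intervening anti-band (where $|R| > r$) to lie entirely in $G$, requiring a critical point $c \in G$ with $|R(c)| > r$ strictly, which a careful case analysis rules out. When $d_n = n - 1$, Theorem~\ref{T3.6}(a) identifies $R$ with a multiple of $T_{n-1}$, whose magnitude exceeds $r$ outside $[x_-, x_+]$, giving $\fre_n \cap ((-\infty,x_-) \cup (x_+,\infty)) = \emptyset$. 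Parts (c) and (d) are short consequences: for (c), $\rho_n$ is supported on $\fre_n$, and by (b) any gap meets $\fre_n$ only inside the closure of at most one component of $\fre_n^\circ$, which carries mass $1/d_n$; for (d), each component of $\fre_n^\circ$ contains exactly one zero of $R$ (since $R$ maps each component bijectively onto $(-r, r)$), so two zeros in the same gap would force it to meet two components, contradicting (b).

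The main obstacle will be the case analysis in (b) used to exclude critical points with $|R(c)| > r$ strictly outside $(x_k, x_{k+1})$: this requires pinning down equality in the technical claim by relating it to interior alternation points of $\fre$, using Theorem~\ref{T3.5} and the detailed structure of $R'$ across the gaps of $\fre$.
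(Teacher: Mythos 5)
Your technical claim that every critical point $c$ of $R=R_{x_0,n}$ satisfies $|R(c)|\ge r$ is correct, and your derivation of part (a) from it (critical values avoid $(-r,r)$, so the real root count of $R-t$ is constant for $t\in(-r,r)$, giving $d_n$ disjoint real bands) is a clean alternative to the paper's approach, which instead applies the intermediate value theorem in the $n-1$ sign-change intervals $(x_j,x_{j+1})$ and uses reality of coefficients to place the last preimage on $\bbR$. Both routes work for (a), and your derivations of (c) and (d) from (b) are fine.

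The genuine gap is in part (b). You propose to show that no anti-band (i.e.\ gap of $\fre_n$) can lie entirely inside a gap $G$ of $\fre$ by ruling out a critical point $c\in G$ with $|R(c)|>r$ \emph{strictly}. But such critical points exist whenever $\fre\neq\fre_n$: any anti-band of positive length contains a critical point with $|R(c)|>r$, and these anti-bands generically poke into gaps of $\fre$ (one endpoint is in $\fre$, the rest of the anti-band lies in a gap $G$, so $c\in G$). The paper's own Example~\ref{E4.8} describes sequences $n_j$ for which $\fre_{n_j}\cap K_k$ is an interval strictly inside the gap $K_k$, forcing anti-bands on both sides with critical points inside $K_k$ satisfying $|R(c)|>r$. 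So equality in your technical claim fails exactly where your plan needs it, and no ``careful case analysis'' can rescue the approach as stated. What must be shown instead is the weaker but correct statement that every anti-band has at least one \emph{endpoint} in $\fre$. The paper proves this by combinatorics of the $x_0$-alternating set: the $2d_n$ points where $R=\pm r$ split into two singlets and $d_n-1$ sign-constant pairs; matching these against the $n+1$ alternation points (which must exhibit exactly one sign-repeat, at $x_k,x_{k+1}$) forces the special pair $\{\alpha,\beta\}$ to contribute both its points, one of the two singlets to be dropped, and every remaining pair to contribute at least one point, which therefore lies in $\fre$. That argument gives you (b) directly, and you should replace your critical-value argument with it.
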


\begin{remarks} 1. We recall that the $\frg$-\emph{harmonic measure} of a set $\frf\subset\frg$ is $\rho_\frg(\frf)$.

2. Since $\fre_n$ is bounded and components of $\fre_n^\circ$ are connected, by (b), either $\rho_n((-\infty,x_-))=0$ or $\rho_n((x_+,\infty))=0$ (or both).

3. Theorem \ref{T3.5} (e) follows from the proof of Proposition \ref{P3.7} (b).

4. It follows from (a) that $\fre_n$ consists of at most $d_n$ real intervals.  This fact was already proven in \cite{Sch11}.
\end{remarks}

\begin{proof}
As noted before, if $x_0\in\bbR\bs[x_-,x_+]$ then $R_{x_0,n}$ is a constant multiple of the Chebyshev polynomial, $T_{n}$. Likewise, if $d_n=n-1$ then, by Theorem~\ref{T3.6}~(a), the residual polynomial $R_{x_0,n}$ is a constant multiple of the Chebyshev polynomial, $T_{n-1}$. In those cases the Proposition follows from the corresponding result for Chebyshev polynomials  \cite[Theorems 2.3 and 2.4]{CSZ1}.  Thus, in the following  we assume $d_n=n$ and that $x_0$ lies in a bounded component of $\bbR\bs\fre$.

(a) Let $\{x_j\}_{j=1}^{n+1}\subset\fre$ be as in the alternation theorem, and let $1\le k\le n$ be so that $x_0\in(x_k,x_{k+1})$.  Then by \eqref{3.2} and the intermediate value theorem, the residual polynomial $R_{x_0,n}$ attains each value $y\in(-r_{x_0,n},r_{x_0,n})$ in the $n-1$ intervals $(x_j,x_{j+1})$, $j=1,\dots,k-1,k+1,\dots,n$ an odd number of times which accounts for all the pre-images of $y$ but one.  By reality of $R_{x_0,n}$, the remaining solution is also in $\bbR$ and so all values occur once proving that there are $n$ distinct intervals in $\fre_n^\circ$.  Hence $\fre_n^\circ$ lies on the real line and so, by the open mapping theorem, does $\fre_n$. By Theorem~\ref{T3.8} below, $\fre_n^\circ$ consists of $n$ components each of harmonic measure $1/n$.

(b) The analysis of taking the values in $(-r_{x_0,n},r_{x_0,n})$ shows that $\fre_n^\circ$ has alternating intervals where $R_{x_0,n}$ increases and decreases.  It follows that $R_{x_0,n}$, which has $2n$ points (counting multiplicity) where it takes the values $\pm r_{x_0,n}$, must have those points (listed in increasing order) as one of one sign, then successive pairs (counting multiplicity of solutions of $R_{x_0,n}\pm r_{x_0,n}=0$) of the same sign (opposite to the previous sign), ending in a singlet.  It is easy to see that to have the requisite number of sign changes, the alternating set must contain both extreme points at the end of the gap containing $x_0$, one from each of the remaining $n-2$ pairs and one of the two singlets.  It follows that each gap of $\fre_n^\circ$ (including the case of touching gaps) must have at least one end in $\fre$.  If some gap of $\fre$ contained parts of two bands from $\fre_n$, it would contain the closure of the entire gap between them which violates the conclusion we reached that such gaps have an endpoint in $\fre$.  Thus at most one component of $\fre_n$ intersects any gap of $\fre$ as claimed.  Theorem~\ref{T3.5}~(c) implies that the gap containing $x_0$ is disjoint from $\fre_n$.

(c) Since, by (b), each gap of $\fre$ intersects at most one component of $\fre_n^\circ$ and, by (a), the equilibrium measure $\rho_n$ of each component of $\fre_n^\circ$ is $1/n$, it follows that $\rho_n$ of each gap of $\fre$ is at most $1/n$.

(d) Between any two zeros of $R_{x_0,n}$, there are a pair of extreme points (counting multiplicity of solutions of $R_{x_0,n}\pm r_{x_0,n}=0$) with the same sign.  Since one of those points must lie in $\fre$, the two zeros can't lie in the same gap.
\end{proof}

What makes these sets useful is that there are explicit formulae for $g_n$ and $B_n$ in terms of $R_{x_0,n}$.  It will be convenient to introduce
\begin{equation} \label{3.5}
  \Delta_n(z) \equiv 2R_{x_0,n}(z)/\norm{R_{x_0,n}}_\fre
\end{equation}
Then

\begin{theorem} \lb{T3.8}
If $\fre\subset\bbR$ and $x_0\in\bbR\bs\fre$, then for all $z\in\bbC$ we have that
\begin{equation} \lb{3.6}
  g_n(z) = \frac{1}{d_n} \big(g_{[-2,2]}\circ\Delta_n\big)(z) = \frac{1}{d_n} \log\left|\frac{\Delta_n(z)}{2}+\sqrt{\left(\frac{\Delta_n(z)}{2}\right)^2-1}\,\right|
\end{equation}
This implies that each open component of $\fre_n^\circ$ has $\fre_n$-harmonic measure $1/n$.  Moreover
\begin{equation} \lb{3.7}
  B_n(z)^{\pm d_n} = \frac{\Delta_n(z)}{2}\mp\sqrt{\left(\frac{\Delta_n(z)}{2}\right)^2-1}
\end{equation}
and
\begin{equation} \label{3.8}
  \frac{2R_{x_0,n}(z)}{\norm{R_{x_0,n}}_\fre} = \Delta_n(z) = B_n(z)^{d_n}+B_n(z)^{-d_n}
\end{equation}
In particular, evaluating \eqref{3.8} at $z=x_0$ implies that
\begin{equation} \label{3.9}
  \norm{R_{x_0,n}}_\fre = 1/\cosh\bigl( d_n g_n(x_0) \bigr)
\end{equation}
\end{theorem}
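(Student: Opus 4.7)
The plan is to exploit the identity $\fre_n = \Delta_n^{-1}([-2,2])$, which is immediate from \eqref{3.4} and \eqref{3.5}, so that $\fre_n$ is realized as a polynomial preimage of $[-2,2]$ via the degree $d_n$ polynomial $\Delta_n$. The backbone of the proof is the classical composition formula for Green's functions under polynomials, combined with the explicit Joukowski expression for the Green's function and Blaschke product of $[-2,2]$.

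First I would establish \eqref{3.6}. The Green's function of $[-2,2]$ is $g_{[-2,2]}(w) = \log\bigl|w/2+\sqrt{(w/2)^2-1}\,\bigr|$. For any non-constant polynomial $P$ of degree $d$ and any regular compact set $\frf$, one has $g_{P^{-1}(\frf)}(z) = \tfrac{1}{d}\, g_\frf(P(z))$; indeed both sides are harmonic on the complement of $P^{-1}(\frf)$, vanish on that preimage, and have matching $\log|z|$ behavior at infinity (using $\log|P(z)| = d\log|z| + O(1)$). Applied to $P = \Delta_n$ and $\frf = [-2,2]$, this gives \eqref{3.6} at once.

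Next, for the harmonic measure claim, by Proposition \ref{P3.7}(a) the set $\fre_n^\circ$ decomposes into $d_n$ real intervals, on each of which $\Delta_n$ is a homeomorphism onto $(-2,2)$. Pulling back the equilibrium measure of $[-2,2]$ by $\Delta_n$ and dividing by $d_n$ gives $\rho_n$ (this is the measure-theoretic counterpart of the Green's function identity, obtained by applying the Laplacian), so each component carries mass $1/d_n$.

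For \eqref{3.7} and \eqref{3.8}, raising $B_n$ to the $d_n$-th power kills the character (the periods of the harmonic conjugate of $d_n g_n$ around the gaps of $\fre_n$ are integer multiples of $2\pi$ since each gap has $\rho_n$-measure a multiple of $1/d_n$), so $B_n^{d_n}$ is single-valued and analytic on $\hatC\setminus\fre_n$. Both branches of $\Delta_n(z)/2 \mp \sqrt{(\Delta_n(z)/2)^2 - 1}$, chosen analytically on $\bbC\setminus\fre_n$, satisfy $|\cdot| = \exp[\mp d_n g_n(z)]$ by \eqref{3.6} and are reciprocals with product $1$. Fixing the branch so that the value at $x_0$ is positive and matches $B_n(x_0)^{\pm d_n}$, both sides of \eqref{3.7} are analytic on $\bbC\setminus\fre_n$, have equal moduli, and agree at $x_0$, hence agree everywhere by the maximum modulus principle. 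Adding the two branches yields \eqref{3.8}. Finally, since $R_{x_0,n}(x_0) = 1$, evaluating \eqref{3.8} at $x_0$ gives $2/\|R_{x_0,n}\|_\fre = B_n(x_0)^{d_n} + B_n(x_0)^{-d_n} = 2\cosh(d_n g_n(x_0))$, which is \eqref{3.9}.

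The main obstacle is the bookkeeping in step three: tracking that $B_n^{d_n}$ is genuinely single-valued on $\hatC\setminus\fre_n$ and matching branches of the square root with the normalization $B_n(x_0)>0$. Once the character-killing observation is made, the remainder is a direct verification using the Joukowski map.
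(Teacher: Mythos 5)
Your proposal is correct and uses essentially the same argument as the paper: the paper's (very brief) proof verifies directly that the right-hand side of \eqref{3.6} is harmonic on $\bbC\bs\fre_n$, vanishes on $\fre_n$, and has the correct logarithmic singularity at infinity, which is exactly the content of your composition formula $g_{P^{-1}(\frf)} = \tfrac{1}{d}\,g_\frf\circ P$ specialized to $P=\Delta_n$, $\frf=[-2,2]$; the harmonic-measure pullback and the character-killing/modulus-matching argument for \eqref{3.7}--\eqref{3.8} correspond to the paper's appeal to the analogous computation in \cite[Theorem~2.3]{CSZ1}. One small remark: your derivation gives $\fre_n$-harmonic measure $1/d_n$ for each component, which is consistent with Proposition~\ref{P3.7}(a) and is correct; the ``$1/n$'' in the statement of Theorem~\ref{T3.8} is best read as $1/d_n$ (they agree in the generic case $d_n=n$, to which the proof of Proposition~\ref{P3.7} reduces).
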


\begin{remarks} 1. \eqref{3.9} shows why we normalize the Widom factor in \eqref{1.4} as we do; it shows that $W_n(\fre_n,x_0)=2$ (the Alternation Theorem implies that $R_{x_0,n}^{(\fre_n)}=R_{x_0,n}^{(\fre)})$.

2. This shows that $\fre_n$ is the spectrum of a periodic Jacobi matrix.  $\Delta_n$ is either its discriminant or its negative (depending on whether $R_{x_0,n}$ determined by $R_{x_0,n}(x_0)=1$ has leading positive or negative coefficient).

3. Recall that $B_n$ is normalized so that $B_n(x_0)>0$.
\end{remarks}

\begin{proof} Essentially, the same as \cite[Theorems 2.2 and 2.3]{CSZ1}.  The function in absolute value in \eqref{3.6} is analytic and non-vanishing on $\fre_n$ (where there is a square root branch cut) and has magnitude $1$ on $\fre_n$.  It is discontinuous across the cut but its magnitude is continuous. Thus the quantity on the right of that equation is harmonic on $\bbC\bs\fre_n$ with a logarithmic singularity at $\infty$.  Thus it is the Green's function as claimed.  The remainder is immediate given the calculation of the harmonic measure in \cite[Theorem~2.3]{CSZ1}.
\end{proof}

As a corollary, we get the lower bound of Schiefermayr \cite{Sch11, Sch17} by a proof which can be viewed essentially as a reworking of his proof  (the characterization of when equality holds is an extension of \cite{Sch11} as we do not apriori assume that $\fre$ is a finite union of intervals):

\begin{corollary} \lb{C3.9}
If $\fre\subset\bbR$ and $x_0\in\bbR\bs\fre$, then for all $n\in\bbN$,
\begin{equation} \label{3.10}
  \norm{R_{x_0,n}}_\fre \ge \frac{2}{e^{ng_\fre(x_0)}+e^{-ng_\fre(x_0)}}
\end{equation}
or equivalently, by \eqref{1.4},
\begin{equation} \lb{3.11}
  W_n(\fre,x_0) \ge 2
\end{equation}
Equality is attained in \eqref{3.10} if and only if $d_n=n$ and $\fre=\fre_n$ (equivalently, $\fre$ is a finite gap set with at most $n$ components, each of which has harmonic measure an integral multiple of  $1/n$).
\end{corollary}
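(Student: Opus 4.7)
The heart of the argument is the closed-form expression \eqref{3.9} from Theorem \ref{T3.8}, which says $\norm{R_{x_0,n}}_\fre = 1/\cosh(d_n g_n(x_0))$. Since $\cosh$ is monotone on $[0,\infty)$, the lower bound \eqref{3.10} reduces to showing $d_n g_n(x_0) \le n g_\fre(x_0)$. The factor $d_n \le n$ is Theorem \ref{T3.4}~(b), so the substantive step is comparing the two Green's functions at $x_0$.

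For that comparison, I would exploit the inclusion $\fre \subset \fre_n$ from Proposition \ref{P3.7}~(b). Because $\fre_n \subset \bbR$ is a finite union of intervals (hence regular), the difference $h = g_\fre - g_n$ is harmonic on the connected set $\bbC \setminus \fre_n$, has nonnegative boundary values $g_\fre \ge 0$ on $\fre_n$, and tends at infinity to the constant $\log(C(\fre_n)/C(\fre)) \ge 0$. The minimum principle then gives $h \ge 0$ on $\bbC \setminus \fre_n$, in particular $g_n(x_0) \le g_\fre(x_0)$, completing the proof of \eqref{3.10}.

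For the equality analysis, equality in \eqref{3.10} forces both $d_n = n$ and $g_n(x_0) = g_\fre(x_0)$; the latter, by the strong maximum principle applied to the nonnegative harmonic $h$, forces $h \equiv 0$ on $\bbC \setminus \fre_n$. Hence $g_\fre$ vanishes on $\fre_n$, so (using regularity of $\fre$) $\fre_n \subset \hat\fre = \fre$, and combined with $\fre \subset \fre_n$ this gives $\fre = \fre_n$. The harmonic-measure description of the components is then immediate from Proposition \ref{P3.7}~(a): the open bands of $\fre_n^\circ$ each have $\fre_n$-harmonic measure $1/n$, and every component of $\fre_n$ is a union of such bands.

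Conversely, given an $\fre$ of the stated form, the standard structural theorem identifies $\fre$ with the spectrum of a periodic Jacobi matrix of period $n$, so $\fre = \Delta^{-1}([-2,2])$ for some real polynomial $\Delta$ of degree $n$. The trial polynomial $P := \Delta/\Delta(x_0)$ has $P(x_0) = 1$ and $\norm{P}_\fre = 2/|\Delta(x_0)|$, and the explicit form of the Green's function of a polynomial preimage (as in Theorem~\ref{T3.8}) yields $|\Delta(x_0)| = e^{n g_\fre(x_0)} + e^{-n g_\fre(x_0)}$, so $P$ realizes the lower bound and equality follows. The main hurdle is the invocation of this characterization of periodic Jacobi spectra, which is a nontrivial structural fact that will need to be cited rather than reproved within the corollary.
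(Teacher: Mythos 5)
Your argument follows the paper's essentially verbatim: you invoke \eqref{3.9}, use $\fre \subset \fre_n$ to get $g_n(x_0) \le g_\fre(x_0)$, apply monotonicity of $\cosh$ with $d_n \le n$, and for the equality case use the strong minimum principle (the paper cites Harnack's theorem, which gives the same conclusion) to force $h \equiv 0$ and hence $\fre_n = \fre$. The only stylistic difference is in the converse: the paper simply observes that $\fre_n=\fre$ makes $g_n(x_0)=g_\fre(x_0)$ trivially, while you take the ``equivalently'' characterization as a starting point and exhibit the extremal trial polynomial $\Delta/\Delta(x_0)$ directly; both are fine and both ultimately rest on the same structural fact about polynomial preimages of $[-2,2]$.
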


\begin{proof}
Since $\fre\subset\fre_n$, we have that
\begin{equation}\label{3.12}
  g_\fre(z) \ge  g_n(z)
\end{equation}
for all $z\in\bbC$.
As $\cosh$ is monotone on $(0,\infty)$ and $d_n\le n$, it follows that $\cosh(ng_\fre(x_0)) \ge \cosh(d_n g_n(x_0))$.  Thus, by \eqref{3.9},
\begin{align}
  \norm{R_{x_0,n}}_\fre = 1/\cosh\bigl(d_n g_n(x_0)\bigr) \ge 1/\cosh\bigl(n g_{\fre}(x_0)\bigr)
\end{align}
which is \eqref{3.10}.

This also shows that equality holds if and only if $d_n=n$ and $g_n(x_0)=g_{\fre}(x_0)$ (because $\cosh$ is strictly monotone).  If $\fre_n=\fre$, the Green's functions are clearly equal at $x_0$.  Conversely, if one has equality at $x_0$, then the inequality \eqref{3.12} plus Harnack's theorem implies equality of the Green's functions in the upper half-plane. Since $\fre_n$ is a polynomial pre-image of an interval, $\fre_n$ is regular for potential theory and so $g_n$ is continuous on $\bbC$ and vanishes on $\fre_n$. Then $g_n=g_\fre$ in the upper half-plane implies that $g_n=g_\fre>0$ on $\bbC\bs\fre$ and hence $\fre_n \subset \fre$. Since $\fre \subset \fre_n$ by construction of $\fre_n$, equality of the sets follows.
\end{proof}

We now turn to the question of upper bounds on $\norm{R_{x_0,n}}_\fre$ that differ from the lower bounds by only a constant.  For norms of Chebyshev polynomials, such upper bounds were proven for $\fre\subset\bbR$, a compact finite gap set, by Widom \cite{Wid69} and Totik \cite{Tot09}.  While these bounds did not have explicit constants, in \cite{CSZ1} we called new bounds with explicit (and in many cases optimal) constant, \emph{Totik--Widom upper bounds} and we use that name here for analogous bounds of residual polynomials even though, so far as we know, there are no prior such bounds in the literature.  These require us to define suitable Parreau--Widom constants for a compact set $\fre\subset\bbC$ relative to a point $z_0\in\bbC\bs\fre$ with $g_\fre(z_0)>0$ (this last condition is true if and only if $z_0$ is in the unbounded component of $\bbC\bs\fre$).  Introducing the set $\frf:=\left\{(z-z_0)^{-1}:z\in\fre\right\}$, we have that $g_\fre(z,z_0) \equiv g_\frf((z-z_0)^{-1})$ is the Green's function for $\fre$ with singularity at $z_0$. We note the well-known fact (\cite[(3.8.46)]{HA}) that $g_\fre(z,w)=g_\fre(w,z)$ so, in particular,
\begin{equation}\label{3.12A}
    g_\fre(\infty,z_0)=g_\fre(z_0)
\end{equation}
Define
\begin{equation} \lb{3.13}
  \calC = \left\{z_0+z^{-1} : z\in\bbC\bs\frf \text{ such that } \nabla g_\frf(z)=0\right\}
\end{equation}
This means $\calC$ is precisely the set of ordinary finite critical points of $g_\fre(\dott,z_0)$ plus infinity if it is a critical point in local coordinates.  We define the PW constant for $\fre$ relative to $z_0$ by
\begin{equation} \lb{3.14}
  \PW(\fre,z_0) \equiv \PW(\frf) = \sum_{c_j\in\calC} g_\fre(c_j,z_0)
\end{equation}
It is known that $\PW(\fre,z_0)<\infty$ if and only if $\PW(\fre)<\infty$.  See, e.g., Hasumi \cite[Chapter~V]{Hasu}.  If that holds, we say that $\fre$ is a PW set.  As with $g_\fre(z)$, for any $w\in\bbC\bs\fre,\, w\ne x_0$, we define the Blaschke type function, $B_\fre(z,w)$, as the unique multivalued analytic function with $|B_\fre(z,w)|=\exp\left[-g_\fre(z,w)\right]$ and $B_\fre(x_0,w)>0$.

Turning to the real case, we recall that if $\frf\subset\bbR$ is compact, then all critical points of $g_\frf$ are in $\bbR$. Since $g_\frf$ is strictly concave on $\bbR\bs\frf$, there are no critical points in the two unbounded components of $\bbR\bs\frf$ and precisely one in each finite component.  If now $x_0\in\bbR$ is in the complement of a compact set $\fre\subset\bbR$, this translates into a similar property for the critical points of $g_\fre(\dott,x_0)$.  Namely, $\calC\subset(\hatR)\bs\fre$ and each gap of $\fre$, except the gap containing $x_0$ together with $(-\infty,x_-)\cup (x_+,\infty)\cup\{\infty\}$, contains exactly one point of $\calC$ and $g_\fre(\dott,x_0)$ attains its maximum in the gap exactly at this point.

As a second application of \eqref{3.9}, we now derive the Totik--Widom upper bound for the Widom factors of the residual polynomials:

\begin{theorem} \lb{T3.10} Let $\fre\subset\bbR$ be a regular, not connected, compact PW set and let $x_0\in\bbR\bs\fre$.  Then
\begin{align} \label{3.15}
  W_n(\fre,x_0) < 2\exp[\PW(\fre,x_0)], \quad n\in\bbN
\end{align}
Moreover, equality holds in the limit for a subsequence $n_j\to\infty$ if and only if, for any gap of $\fre$, $P_{n_j}$ has a zero in that gap for $j$ large which approaches the unique critical point of $g_\fre(\dott,x_0)$ as $j\to\infty$ and the component of $\fre_{n_j}$ in that gap shrinks to that critical point as $j\to\infty$.
\end{theorem}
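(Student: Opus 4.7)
The plan is to combine the exact formula for $\|R_{x_0,n}\|_\fre$ from Theorem~\ref{T3.8} with a single $\cosh$ manipulation to reduce Theorem~\ref{T3.10} to the Green's function inequality
\[(\ast)\qquad a_n:=ng_\fre(x_0)-d_ng_n(x_0)\le\PW(\fre,x_0),\]
and then to establish $(\ast)$ by constructing an appropriate multivalued analytic test function and applying the maximum principle, in the spirit of the Chebyshev Totik--Widom bound of \cite{CSZ1}. Theorem~\ref{T3.8} gives $\|R_{x_0,n}\|_\fre=1/\cosh(d_ng_n(x_0))$ and hence
\[W_n(\fre,x_0)=\frac{2\cosh(ng_\fre(x_0))}{\cosh(d_ng_n(x_0))}.\]
Since $\fre\subset\fre_n$ implies $g_\fre\ge g_n$ and we have $d_n\le n$, the difference $a_n$ is non-negative. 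The identity $\cosh(B+a_n)=\cosh(B)\cosh(a_n)+\sinh(B)\sinh(a_n)$ with $B=d_ng_n(x_0)<\infty$ and $\tanh(B)<1$ yields $\cosh(B+a_n)/\cosh(B)<e^{a_n}$ whenever $a_n>0$. When $a_n=0$, $W_n(\fre,x_0)=2$ is strictly below $2e^{\PW(\fre,x_0)}$ because $\fre$ being disconnected and PW forces $\PW(\fre,x_0)>0$. Either way, assuming $(\ast)$, one concludes $W_n(\fre,x_0)<2\exp[\PW(\fre,x_0)]$.

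To prove $(\ast)$, I would study the multivalued analytic function
\[\Psi(z)=\frac{B_n(z)^{d_n}}{B_\fre(z)^n}\prod_{c_j\in\calC}B_\fre(z,c_j),\]
where $\calC$ is the set of critical points of $g_\fre(\dott,x_0)$. Its modulus equals $\exp\bigl(ng_\fre(z)-d_ng_n(z)-\sum_j g_\fre(z,c_j)\bigr)$, so by the symmetry $g_\fre(x_0,c_j)=g_\fre(c_j,x_0)$ one has $|\Psi(x_0)|=\exp[a_n-\PW(\fre,x_0)]$; thus $(\ast)$ reduces to $|\Psi(x_0)|\le 1$. On the part of the boundary $\fre_n$ lying inside $\fre$, each of $|B_\fre|$, $|B_n|$, and $|B_\fre(\dott,c_j)|$ equals $1$, so $|\Psi|=1$ there. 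The decisive input is on bands of $\fre_n$ strictly inside gaps of $\fre$: by Proposition~\ref{P3.7}(b), each such gap hosts at most one band of $\fre_n$, and the critical point $c_j\in\calC$ in that gap contributes a Blaschke factor that, after first multiplying $\Psi$ by $B_\fre^{n-d_n}$ to absorb its pole at infinity when $d_n<n$, is meant to dominate the $ng_\fre$ contribution on the band. The maximum principle then yields $|\Psi(x_0)|\le 1$.

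The main obstacle is precisely this last boundary estimate on bands of $\fre_n$ strictly inside gaps of $\fre$; it requires a gap-by-gap argument matching the growth of $ng_\fre$ on each band to the Blaschke contribution of the corresponding critical point $c_j$ of $g_\fre(\dott,x_0)$, and uses both the concavity of Green's functions on $\bbR\bs\fre$ and Proposition~\ref{P3.7}. A naive application of the maximum principle as in Corollary~\ref{C3.9}, without the Blaschke factors, would only produce a bound linear in $n$, so the delicate point is that the Blaschke product exactly accounts for the $n$-dependence. The characterisation of equality in the limit then follows by tracking when each of the two inequalities is asymptotically tight: the $\cosh$ step is nearly sharp when $d_{n_j}g_{n_j}(x_0)\to\infty$, and the Blaschke bound saturates on a given gap precisely when the band of $\fre_{n_j}$ in that gap collapses onto $c_j$ while $R_{x_0,n_j}$ picks up a simple zero (permitted by Proposition~\ref{P3.7}(d)) converging to $c_j$.
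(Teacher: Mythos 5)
Your reduction of \eqref{3.15} to the Green's function inequality $(\ast)$: $n g_\fre(x_0) - d_n g_n(x_0) \le \PW(\fre,x_0)$ is correct, and the $\cosh$ addition-formula step, including the separate treatment of the case $a_n=0$ via $\PW(\fre,x_0)>0$ for disconnected $\fre$, is a clean way to get the strict inequality. This part is essentially the algebraic content of \eqref{3.20} and \eqref{3.23} in the paper, which instead split into the cases $d_n=n$ and $d_n=n-1$ and manipulate exponentials directly. However, your proof of $(\ast)$ itself has a genuine gap, which you in fact flag yourself. The maximum principle applied to $\Psi(z)=B_n(z)^{d_n}B_\fre(z)^{-n}\prod_{c_j\in\calC}B_\fre(z,c_j)$ on $\Omega_n$ would require the \emph{pointwise} boundary estimate $n g_\fre(x) \le \sum_{c_j} g_\fre(x,c_j)$ for every $x$ in the bands $\fre_n\bs\fre$ (since there $g_n=0$, so $|B_n|=1$). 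But the right side is a fixed, $n$-independent function on each gap, while the left side grows linearly in $n$ at any point of the gap interior with $g_\fre$ bounded below; the only way the estimate can hold is if the bands collapse to the critical points or to the gap edges fast enough, and that is precisely what one is trying to prove, not a hypothesis one may invoke. Your remark that the Blaschke factor ``is meant to dominate the $ng_\fre$ contribution'' is a restatement of the difficulty, not a resolution.

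The paper's actual proof of $(\ast)$ is an averaged argument that sidesteps the pointwise issue. It solves a Dirichlet problem: $u(z)=g_\fre(z,x_0)-g_n(z,x_0)$ is harmonic on $(\hatC)\bs\fre_n$ (the poles at $x_0$ cancel), vanishes q.e.\ on $\fre$, and equals $g_\fre(x,x_0)$ on $\fre_n\bs\fre$, so the harmonic-measure representation gives
\begin{equation*}
g_\fre(x_0)-g_n(x_0)=\int_{\fre_n\bs\fre}g_\fre(x,x_0)\,d\rho_n(x).
\end{equation*}
One then uses Proposition~\ref{P3.7}(c) (the $\rho_n$-mass of each gap is at most $1/d_n$) together with the fact that $g_\fre(\cdot,x_0)$ attains its maximum in each gap at its critical point, obtaining the \emph{integrated} bound $\frac{1}{d_n}\sum_j g_\fre(c_j,x_0)=\frac{1}{d_n}\PW(\fre,x_0)$, which is insensitive to the pointwise size of $g_\fre$ on the bands; the case $d_n=n-1$ needs the extra observation $g_\fre(x_0)=g_\fre(\infty,x_0)\le g_\fre(c_\infty,x_0)$ because $\fre_n$ then misses the unbounded gap. (Incidentally, the Totik--Widom bound in \cite{CSZ1} that you cite was proved by this same harmonic-measure method, not by a Blaschke-product maximum principle.) The equality characterization in the limit then drops out of the same integral formula together with dominated convergence for sums: asymptotic equality forces $d_{n_j}\rho_{n_j}(G\cap\fre_{n_j})\to 1$ and the band in each gap $G$ concentrating at the critical point, which is equivalent to the stated condition on zeros.
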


\begin{remarks}  1. It can be shown that whenever the zeros in some gap converge to a limit in that gap, the corresponding component of $\fre_{n_j}$ in that gap shrinks to that point exponentially fast, so, in the final assertion, we could drop the last clause.

2. If $\fre$ is an interval, then for all $n$, $W_n(x_0,\fre)=2$ (by \eqref{3.9}), so since $\PW(\fre,x_0)=0$, we have equality in \eqref{3.15}, which is why we have the condition ``not connected''.
\end{remarks}

\begin{proof} The proof follows that of the analog we used in the Chebyshev case in \cite{CSZ1}.  As there, we start by recalling why if $\frf$ is a non-polar, compact subset of $\bbC$, the equilibrium measure, $d\rho_\frf$, is also called harmonic measure.  For one can show (see Conway \cite{Conway} or Simon \cite[Corollary~3.6.28]{HA}) that if $f$ is a continuous function on $\frf$, there is a unique function, $u_f$, harmonic on $(\bbC \cup \{\infty \})\bs\frf$, which approaches $f(x)$ for q.e. $x \in \frf$ (i.e., solves the Dirichlet problem) and so that
\begin{equation} \lb{3.16}
   u_f(\infty) = \int_{\frf}f(x)d\rho_{\frf}(x)
\end{equation}
The function $u_f(z)=g_\fre(z,x_0)-g_n(z,x_0)$ is harmonic on $(\bbC \cup \{\infty \})\bs\fre_n$ (because the logarithmic singularities at $x=x_0$ cancel and lead to a removable singular point).  By \eqref{3.12A}, we have that $u_f(\infty)=g_\fre(x_0)-g_n(x_0)$.  Moreover, the limiting value of $u_f$ on $\fre$ is 0 (since $\fre\subset\fre_n$) and is $g_\fre(x,x_0)$ for $x\in\fre_n\bs\fre$.  Therefore, by \eqref{3.16} with $\frf=\fre_n$,
\begin{equation}\label{3.18}
  g_\fre(x_0)-g_n(x_0) = \int_{\fre_n\bs\fre} g_\fre(x,x_0)\, d\rho_n(x)
\end{equation}
By Proposition~\ref{P3.7}~(a), $\rho_n$ of each gap of $\fre$ is at most $1/d_n$.

First, suppose $d_n=n$.  Then, summing over gaps in \eqref{3.18} and using the fact that the maximum of $g_\fre(x,x_0)$ in a gap, $G_j$, is at the critical point, $c_j$, we have
\begin{equation} \lb{3.19}
  g_\fre(x_0)-g_n(x_0) \le \frac{1}{d_n} \sum_{c_j\in\calC} g_\fre(c_j,x_0) =  \frac{1}{d_n} \PW(\fre,x_0)
\end{equation}
Then using \eqref{3.9} and the fact that
\begin{equation}
 g_n(x_0) \le g_\fre(x_0) \; \Rightarrow \; {\bigl(1+e^{-2d_n g_n(x_0)}\bigr)^{-1} \le \bigl(1+e^{-2d_n g_\fre(x_0)}\bigr)^{-1}}
\end{equation}
we get
\begin{align}
  \norm{R_{x_0,n}}_\fre &= \frac{2}{e^{d_n g_n(x_0)}+e^{-d_n g_n(x_0)}}
  = \frac{2e^{-d_n g_n(x_0)}}{1+e^{-2d_n g_n(x_0)}} \nonumber \\
  &\le \frac{2e^{-d_n g_{\fre}(x_0)}e^{\PW(\fre,x_0)}}{1+e^{-2d_n g_{\fre}(x_0)}}
  = \frac{2e^{\PW(\fre,x_0)}}{e^{d_n g_{\fre}(x_0)}+e^{-d_n g_{\fre}(x_0)}} \lb{3.20}
\end{align}
By \eqref{1.4}, this is the desired bound \eqref{3.15} in the case $d_n=n$.

Next, suppose $d_n=n-1$.  In this case, by Proposition~\ref{P3.7} (b), $\fre_n$ does not intersect the ``unbounded gap'' of $\fre$ (i.e., $(-\infty,x_-)\cup (x_+,\infty)\cup\{\infty\}$).  Let $\calC'=\calC\bs\{c_{\infty}\}$, where $c_{\infty}$ is the critical/maximum point of $g_\fre(\dott,x_0)$ in the unbounded gap.  Then
\begin{equation} \lb{3.21}
  g_\fre(x_0)-g_n(x_0) = \int_{\fre_n\bs\fre} g_\fre(x,x_0)\, d\rho_n(x)
  \le \frac{1}{d_n} \sum_{c_j\in\calC'} g_\fre(c_j,x_0)
\end{equation}
and since, by \eqref{3.12A}, $g_\fre(x_0)=g_\fre(\infty,x_0) \le g_\fre(c_\infty,x_0)$, we have
\begin{equation} \lb{3.22}
   \Bigl(1+\frac{1}{d_n}\Bigr)g_\fre(x_0)-g_n(x_0)
      \le \frac{1}{d_n} \sum_{c_j\in\calC}g_\fre(c_j,x_0)
      = \frac{1}{d_n} \PW(\fre,x_0)
\end{equation}
So using \eqref{3.9} and the fact that $d_n g_n(x_0) \le (d_n+1)g_\fre(x_0)$ implies
\begin{equation}
 {\bigl(1+e^{-2d_n g_n(x_0)}\bigr)^{-1} \le \bigl(1+e^{-2(d_n+1)g_\fre(x_0)}\bigr)^{-1}}
\end{equation}
we get
\begin{align}
  \norm{R_{x_0,n}}_\fre &= \frac{2}{e^{d_n g_n(x_0)}+e^{-d_n g_n(x_0)}}
  = \frac{2e^{-d_n g_n(x_0)}}{1+e^{-2d_n g_n(x_0)}} \lb{3.23} \\
  &\le \frac{2e^{-(d_n+1)g_{\fre}(x_0)}e^{\PW(\fre,x_0)}}
            {1+e^{-2(d_n+1) g_{\fre}(x_0)}}
  = \frac{2e^{\PW(\fre,x_0)}}{e^{(d_n+1)g_{\fre}(x_0)}+e^{-(d_n+1)g_{\fre}(x_0)}} \nonumber
\end{align}
By \eqref{1.4}, this is the desired bound \eqref{3.15} in the case $d_n=n-1$.

Because $n_j\to\infty$, asymptotic equality in \eqref{3.15} is equivalent to $\lim_{j\to\infty} r_{x_0, n_j}e^{n_jg_\fre(x_0)} = 2 \exp\left[\PW(\fre,x_0)\right]$ which, by \eqref{3.9}, is equivalent to
$\lim_{j\to\infty} d_{n_j}[g_\fre(x_0)-g_{n_j}(x_0)] = \PW(\fre,x_0)$.  By \eqref{3.18} and dominated convergence for sums, this is equivalent to knowing that for each fixed gap, $G$, of $\bbR\bs\fre$, one has that $d_{n_j}\rho_{n_j}(G\cap \fre_{n_j}) = 1$ and that $G\cap \fre_{n_j}$ is more and more concentrated about $c$, the critical point that lies in $G$.
\end{proof}

We close this section with a sufficient condition for saturation of the lower bound in the limit which is complementary to the final result in the last theorem.

\begin{theorem} \lb{T3.11} Equality in \eqref{3.11} holds in the limit for a subsequence $n_j\to\infty$ if, for any gap, $G$, of $\fre$, one has that $\max_{y\in\fre_{n_j}\cap G} \dist(y,\fre)\to 0$.
\end{theorem}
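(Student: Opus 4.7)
The plan is to reduce $\lim_{j\to\infty} W_{n_j}(\fre,x_0) = 2$ to the convergence $n_j g_\fre(x_0) - d_{n_j} g_{n_j}(x_0) \to 0$, and then to bound the latter via the integral formula \eqref{3.18} from the proof of Theorem~\ref{T3.10}.

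First, by \eqref{3.9}, $r_{x_0,n} = 1/\cosh(d_n g_n(x_0))$, so
\[
  W_n(\fre,x_0) = \frac{2\cosh(n g_\fre(x_0))}{\cosh(d_n g_n(x_0))}.
\]
Since $\fre \subset \fre_n$ forces $g_n \le g_\fre$ and $d_n \le n$, this ratio is $\ge 2$, recovering \eqref{3.11}. Under the shrinking hypothesis, standard continuous dependence of Green's functions on the underlying compact set (via Hausdorff convergence $\fre_{n_j}\to\fre$) gives $g_{n_j}(x_0) \to g_\fre(x_0) > 0$, so both $\cosh$ arguments grow linearly in $n_j$, and $W_{n_j}(\fre,x_0) \to 2$ is equivalent to $n_j g_\fre(x_0) - d_{n_j} g_{n_j}(x_0) \to 0$.

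Second, I would use the identity
\[
  g_\fre(x_0) - g_n(x_0) = \int_{\fre_n \bs \fre} g_\fre(x, x_0) \, d\rho_n(x),
\]
also from the proof of Theorem~\ref{T3.10}. Splitting the integral over the gaps $G$ of $\fre$ and using Proposition~\ref{P3.7}~(c), which gives $\rho_n(G \cap \fre_n) \le 1/d_n$, this yields
\[
  d_n \bigl[g_\fre(x_0) - g_n(x_0)\bigr] \le \sum_G \max_{x \in G \cap \fre_n} g_\fre(x, x_0).
\]
By the shrinking hypothesis combined with continuity of $g_\fre(\cdot, x_0)$ (which vanishes on $\fre$ by regularity), each term on the right tends to $0$ as $j\to\infty$. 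A dominated-convergence argument with the natural pointwise bound $\max_{x \in G} g_\fre(x, x_0)$ (summable precisely in the Parreau--Widom case, and otherwise handled by truncating to the finitely many gaps into which $\fre_{n_j}$ actually protrudes and controlling the rest directly) then gives $d_{n_j}[g_\fre(x_0) - g_{n_j}(x_0)] \to 0$.

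Finally, decomposing $n g_\fre(x_0) - d_n g_n(x_0) = (n-d_n)g_\fre(x_0) + d_n[g_\fre(x_0) - g_n(x_0)]$, the second summand vanishes by the previous step, while the first equals $0$ when $d_n = n$ and $g_\fre(x_0)$ when $d_n = n-1$. A subsequence realizing $W_{n_j}\to 2$ cannot contain infinitely many indices with $d_{n_j} = n_j - 1$: in that case Theorem~\ref{T3.6} forces $R_{x_0,n}$ to be a normalized Chebyshev polynomial of degree $n-1$, and a direct analysis shows $W_{n_j}$ approaches a limit strictly greater than $2$. Hence one may refine to a subsequence with $d_{n_j} = n_j$, and the argument closes. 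The hard part will be the sum-over-gaps estimate in the non-PW setting together with the clean exclusion of the degree-drop alternative.
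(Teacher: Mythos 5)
Your approach is the paper's: apply the integral formula \eqref{3.18}, split over gaps, invoke the bound $\rho_n(G)\le 1/d_n$ from Proposition~\ref{P3.7}~(c), and pass to the limit by dominated convergence. What you add that the paper's proof sketch passes over is the careful separation
\begin{equation*}
  n_j g_\fre(x_0)-d_{n_j}g_{n_j}(x_0)
  = d_{n_j}\bigl[g_\fre(x_0)-g_{n_j}(x_0)\bigr]+(n_j-d_{n_j})\,g_\fre(x_0),
\end{equation*}
correctly noting that the integral estimate kills only the first summand and that the degree-defect term is $g_\fre(x_0)>0$ whenever $d_{n_j}=n_j-1$.

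Your resolution of that degree-drop alternative, however, is circular. You argue that a subsequence realizing $W_{n_j}\to 2$ cannot have $d_{n_j}=n_j-1$ infinitely often and that one may therefore refine; but $W_{n_j}\to 2$ is the \emph{conclusion} you are trying to establish, not an available premise, and passing to a sub-subsequence only yields the limit along that sub-subsequence, which is not what the theorem asserts. The underlying worry is genuine: the shrinking hypothesis does \emph{not} preclude $d_{n_j}=n_j-1$ infinitely often. Indeed, in Example~\ref{E4.5} (with $\fre=[-b,-a]\cup[a,b]$ and $x_0=0$) one has $\fre_n=\fre$ for every $n$, so $\fre_{n}\cap G=\emptyset$ for every gap $G$ and the hypothesis holds vacuously, yet $d_{2k+1}=2k$ and
\begin{equation*}
  W_{2k+1}(\fre,0)=\frac{2\cosh\bigl((2k+1)g_\fre(0)\bigr)}{\cosh\bigl(2k\,g_\fre(0)\bigr)}\longrightarrow 2e^{g_\fre(0)}>2 .
\end{equation*}
So the stated sufficient condition fails along the odd subsequence; your decomposition in fact reveals that the theorem requires the extra hypothesis $d_{n_j}=n_j$ for all large $j$ (equivalently, $\deg R_{x_0,n_j}=n_j$). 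Once that is added, your argument is correct and coincides with the paper's; the dominated-convergence concern you raise in the non-PW case is likewise present in the paper's proof and is secondary to the degree issue.
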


\begin{remarks} 1. With a little more effort, it should be possible to prove the stated sufficient condition is necessary.  The point is that we expect if in the limit, $\fre_{n_j}\cap G$ contains some point, $y$, then as $j\to\infty$, $\fre_{n_j}\cap G$ contains an entire band concentrated about $y$ which contributes $g_\fre(y,x_0)>0$ to $d_{n_j}$ times the integral on the right of \eqref{3.18}.

   2.  The sufficient condition of this theorem for a gap, $G$, is equivalent to one that says for large $n_j$, any zero of $P_{n_j}$ in $G$ must approach the edges of $G$.
\end{remarks}

\begin{proof}  As in the last paragraph of the last proof, one has asymptotic equality in \eqref{3.11} if and only if $\lim_{j\to\infty} d_{n_j}[g_\fre(x_0)-g_{n_j}(x_0)] = 0$.  By the dominated converge theorem for sums and \eqref{3.18}, this happens if and only if for any gap, $G_k$, one has that
\begin{equation}\label{3.24}
  \lim_{j\to\infty} \int_{\fre_{n_j}\cap G_k} g_\fre(x,x_0)\, d\rho_{n_j}(x) = 0
\end{equation}
Since $g_\fre(\cdot,x_0)\to 0$ at the edges and $d_n\rho_n(G_k)$ is at most $1$, this follows from the assumption that $\max_{y\in\fre_{n_j}\cap G_k} \dist(y,\fre)\to 0$.
\end{proof}

\section{Some Examples} \lb{s4}

This section will discuss some illuminating examples, the first two (as well as Example~\ref{E2.3} on lemniscates) deal with the complex case and the others with the real case that has been our main focus here.  In discussing some asymptotics, the classical Chebyshev polynomials of the first kind, which we denote by $C_n$, will be useful:
\begin{equation}\label{4.1}
  C_n(\cos(\theta)) \equiv \cos(n\theta)
\end{equation}
Despite the name, they are not Chebyshev polynomials since they are not monic although they are multiples of and, indeed, dual Chebyshev polynomials for $[-1,1]$.  We note their asymptotics which is classical (but also a special case of the results in \cite{CSZ1, CSYZ2})
\begin{equation}\label{4.2}
  C_n(z) \sim \frac{1}{2} \left[\frac{z+\sqrt{z^2-1}}{2}\right]^n
\end{equation}

\begin{example} [Cases with $n$-fold symmetry and $\deg(R_{z_0,j})=0$] \lb{E4.1} If $z_0$ is in a bounded component of $\bbC\bs\fre$, then, by the maximum principle, any polynomial, $P$, with $P(z_0)=1$ has $\norm{P}_\fre \ge 1$ with strict inequality if $P$ is not constant.  It follows that $R_{z_0,n}(z)\equiv 1$ for all $n$ so to get non-trivial results, we should only consider cases with $z_0$ in the unbounded component.  We consider $z_0=0$.  If $\fre$ is invariant under rotation by angle $2\pi/n$ about $0$, then uniqueness of $R$ implies that $R_{z_0=0,j}(e^{2\pi/j}z)=R_{z_0=0,j}(z)$ for all $j$, so the only terms that are allowed in $R$ are of the form $c_{kn}z^{kn};\,k=0,1,\ldots$.  It follows that $R_{z_0=0,j}(z)=1$ for $0\le j\le n-1$.  It can even happen that this holds for $j=n$ (and so for $j=n+1,\ldots,2n-1$) for suppose that $1,e^{i\pi/n}\in\fre$.  Noticing that for any $c\in\bbC$, one has that $|1+c|^2+|1-c|^2=2(1+|c|^2)$, we see that, for $c\ne 0$, if $P(z)=1+cz^n$, either $|P(1)|>1$ or $|P(e^{i\pi/n})|>1$ and thus we also have that $R_{z_0=0,n}(z)=1$.  It is easy to find proper, closed, perfect subsets of $\partial\bbD$ which are $n$-fold invariant containing all the $2n$-th roots of unity.  In this way, one can construct, for any finite $m$, $n$ fold invariant sets with $R_{z_0=0,j}(z)=1$ for $0\le j\le mn-1$.  However, we note that, by Theorem~\ref{T2.4} (a), if $z_0$  is in the unbounded component, for any fixed $\fre$, we have that $\deg(R^{(\fre)}_{z_0,j})\to\infty$ as $j\to\infty$.
\end{example}

\begin{example}  [{$\fre=[-1,1];\,z_0\notin\bbR$}] \lb{E4.2} Let $\fre= [-1,1]$.  As we've seen, if $x_0\in\bbR\bs\fre$, then
\begin{equation}\label{4.2A}
  R_{x_0,n}(x) = C_n(x)/C_n(x_0); \quad \; r_{x_0,n}=1/|C_n(x_0)|
\end{equation}
This is so simple it is natural to guess, or at least hope, that it extends to complex $z_0$.  But it does not.  The dual residual problem was solved when $z_0$ is on the imaginary axis by Freund--Ruscheweyh \cite{FR86} and for general $z_0\in\bbC\bs\fre$ by Yuditskii \cite{Yud99}.  The formula for $r_{z_0,n}$ is quite complicated using elliptic functions.  Here, we only make a few remarks.  We first note that $1/|C_n(z_0)|$ diverges as $z_0$ approaches a zero on the real axis while, of course, $r_{z_0,n}\le 1$.  Indeed, in the entire lemniscate $\{z : |C_n(z)|<1\}$, one has that $r_{z_0,n}<1/|C_n(z_0)|$.  But much more is true.  Take $n=1$ and let $P_\eps(z)=(z+i\eps)/(z_0+i\eps)$ when $\Im(z_0)>0$ and $\eps>0$. Then
\begin{equation}\label{4.2B}
  \norm{P_\eps}_\fre^2 = \frac{1+\eps^2}{|z_0|^2+\eps^2+2\Im(z_0)\eps} < \frac{1}{|z_0|^2}
\end{equation}
for $\eps$ small.  This shows that everywhere off $\bbR$, one has that $r_{z_0,n=1} < 1/|C_{n=1}(z_0)|$ and Yuditskii's work implies the analog for all $n$.
\end{example}
It is natural to also consider the residual polynomials of an ellipse with foci on the real line.  In this case the polynomials $C_n(z)/C_n(z_0)$ may or may not be the residual polynomials even when the point $z_0$ is real and outside the ellipse.  It depends on the configuration; see \cite{FiFr90,FiFr91} for further details.
\begin{example} [$n=1$, $x_0\in\bbR$] \lb{E4.3}  There is no maximum principle for general polynomials on $\bbR$ but there is for affine functions, which unless they are constant take their maximum over a bounded closed interval at an endpoint.  It follows that if $\fre\subset\bbR$ and $x_0$ is contained in a real gap of $\bbR\bs\fre$, then $R_{x_0,n=1}(z)\equiv 1$ showing that it can very often happen that $\deg(R_{x_0,n})=n-1$.  Of course, $R_{x_0,n}\equiv 1$ can only happen for $n=1,0$ by Theorem~\ref{3.4} (b).
\end{example}

\begin{example} [$\fre$ reflection invariant about $x_0=0$] \lb{E4.4} The only real analog of the $n$-fold symmetry of Example~\ref{E4.1} is $2$-fold symmetry of sets $\fre\subset\bbR$ with the property that $x\in\fre\Rightarrow -x\in\fre$.  We also suppose $0\notin\fre$ and take $x_0=0$.  As in Example~\ref{E4.1}, the $R_{x_0=0, n}$ are even polynomials and thus $\deg(R_{x_0=0, 2n+1})=2n$. With $T_n$ the Chebyshev polynomials, it therefore follows from Theorem~\ref{T3.6} that
\begin{equation}\label{4.3}
  R_{x_0=0,n}(x) = \left\{
                     \begin{array}{ll}
                       T_{n-1}(x)/T_{n-1}(0), & \hbox{ if $n$ is odd} \\
                       T_{n}(x)/T_{n}(0), & \hbox{ if $n$ is even}
                     \end{array}
                  \right.
\end{equation}
It is interesting to see how existence of the limits in \eqref{1.7} and \eqref{1.8} for some sequence of even $n_j\to\infty$ implies the existence of limits for $n_j+1\to\infty$ and what the relation has to be of the limit $F_n$ to the limiting $F_{n+1}$.
\end{example}

\begin{example}  [{$\fre=[-b,-a]\cup[a,b]$ for $0<a<b$, $x_0=0$}] \lb{E4.5} This special case of Example~\ref{E4.4} has explicit formulae.  Let $q$ be the quadratic polynomial
\begin{equation}\label{4.4}
  q(z) = \frac{2(z^2-a^2)-(b^2-a^2)}{b^2-a^2}
\end{equation}
picked so that $q(\pm a)=-1,\,q(\pm b)=1$ which implies that $\fre=q^{-1}([-1,1])$ and that $g_\fre(z)=\tfrac{1}{2}g_{[-1,1]}(q(z))$ which in turn implies that
\begin{equation}\label{4.5}
  B_\fre(z)^2 = B_{[-1,1]}\bigl(q(z)\bigr)
\end{equation}
This function is thus single-valued, so $\chi_\fre^2\equiv 1$, indeed $\chi_\fre(\gamma) = (-1)^{\#(\gamma)}$ where $\#(\gamma)$ is the number of times that $\gamma$ winds around $[-a,-b]$ plus the number of times it winds around $[a,b]$.  The Widom minimizer for $\chi_\fre^{2n}$ is thus $1$ and \eqref{1.8} implies that
\begin{equation}\label{4.6}
  \lim_{n\to\infty} e^{2ng_\fre(x_0=0)}B_\fre(z)^{2n}R_{x_0=0,2n}(z) = 1
\end{equation}
for all $z\in\bbC\bs\fre$. Since $T^{(\fre)}_{2k}(z)=T^{[-1,1]}_k(q(z))$ and $R_{x_0=0,2n}$ is given by \eqref{4.3}, this is consistent with \eqref{4.2}. For $n=2k+1$, in the language of the next section, $R_{x_0=0, n}$ has a zero at infinity, so the corresponding $Q_{\chi_\fre}$ is a $B_S$ where $S$ has a zero at $\infty$ and none in gap $(-a,a)$, that is, $Q_{\chi_\fre}=B_\fre$ which also has a zero at infinity. By \eqref{5.4}, we see that $F_{\chi_\fre}=e^{g_\fre(x_0=0)}B_\fre$. Noting that $R_{x_0=0,2k+1}=R_{x_0=0,2k}$, we see that
\begin{equation}
e^{(2k+1)g_\fre(x_0=0)}(B_\fre)^{2k+1}R_{x_0=0,2k+1} = F_{\chi_\fre}e^{2kg_\fre(x_0=0)}(B_\fre)^{2k}R_{x_0=0,2k}
\end{equation}
which, by \eqref{4.6}, converges to $F_{\chi_\fre}$ consistent with \eqref{1.8}.
\end{example}

\begin{example} [$\fre$ a period-$n$ set] \lb{E4.6} Suppose $\fre$ is a period-$n$ set, i.e., there is a polynomial, $\Upsilon$, of degree $n$ exactly which is a dual residual or Chebyshev polynomial of degree exactly $n$ so that $\norm{\Upsilon}_\fre=1$ and $\fre=\Upsilon^{-1}([-1,1])$.  The maxima and minima of $\Upsilon$ all lie in $\fre$ and, by the analysis in the proof of Proposition \ref{P3.7} (b), they occur with singlets at the ends and pairs in the middle.  Take $x_0$ in one of the bounded gaps, $G$, of $\bbR\bs\fre$.  Then by taking both endpoints of $G$, the two singlets in the extreme points, and one from each of the other pairs of extreme points, we obtain an $x_0$-alternating set with $n+2$ points.  It follows from Theorem~\ref{T3.2} that for $j\geq 1$,
\begin{equation}\label{4.7}
  R_{x_0,m}(x) = C_j\bigl(\Upsilon(x)\bigr)/C_j\bigl(\Upsilon(x_0)\bigr)
\end{equation}
for $m=jn$ or $m=jn+1$. As in the analysis in the last example, one has that $F_{jn}=1$ and $F_{jn+1} = e^{g_\fre(x_0)}B_\fre$.
\end{example}

\begin{example} [$\fre$ a period-$n$ set shrunk at one end] \lb{E4.7} Start out with a period-$n$ set, $\frf$, with $y_\pm$ the top/bottom of the set and $\Upsilon$ as defined in the last example.  Let $B=[a,y_+]$ be the top connected component of $\frf$ and consider $\fre=\frf\bs (c,y_+]$, where $a<c<y_+$.  Suppose $x_0$ is in one of the bounded gaps of $\fre$.  Take the $x_0$-alternating set with $n+2$ points for $R_{x_0,n}^{(\frf)}$ as in the previous example and remove $y_+$ from it.  We then get an $x_0$-alternating set in $\fre$ with $n+1$ points showing that  $R_{x_0,n}^{(\fre)}=\Upsilon(x)/\Upsilon(x_0)$.  One interesting feature of this example is that depending on whether we pick $c$ above or below the zero of $\Upsilon$ in $[a,y_+]$, we see that the extra zero not accounted for in Theorem~\ref{T3.4}~(a) can either lie in $[x_-,x_+]$ or not.
Moreover, if $a$ is not an endpoint of the gap containing $x_0$, then we can even take $\fre=\frf\bs B$ showing that $\fre_n=\frf$ may contain an extra component outside $[x_-,x_+]$.
Similarly, if we shrink one of the internal bands of $\frf$ we get an example where $\fre_n$ lies within $[x_-,x_+]$ but $\fre\ne\fre_n$.
\end{example}

\begin{example} [{Example where the limit points of $W_n(\fre,x_0)$ fill the interval $\bigl[2,2\exp[\PW(\fre,x_0)]\bigr]$}] \lb{E4.8}
\eqref{1.5} sets upper and lower bounds on Widom factors and so on their possible limit points.  In this example, we want to discuss finite gap sets where the set of limit points is the whole interval $\bigl[2,2\exp[\PW(\fre,x_0)]\bigr]$.  These are just analogs of what we discussed for Chebyshev polynomials in \cite{CSZ3}.  We'll need the notions of gap sets and ideas from the next section.  As in \cite{CSYZ2, CSZ3}, if $\fre$ is a finite gap set with $m$ connected components so that no $m-1$ of them have a linear rational relation among their harmonic measures, then for $x_0$ in a bounded component of $\bbR\bs\fre$, and any gap set whose gap collection doesn't include the gap with $x_0$, there is a sequence $R_{x_0,n_j};\, j=1,2,\ldots$ whose zeros inside the gaps approach exactly the points of the gap set.  As in those papers, one can show for $K_k\in\calG_0$, that for $j$ large, we have that $\fre_{n_j}\cap K_k$ is a closed interval entirely within $K_k$ that shrinks to the point $x_k$, so
\begin{equation}\label{4.8}
  n_j\int_{K_k} g_\fre(x,x_0) d\rho_{n_j}(x) \to g_\fre(x_k,x_0)
\end{equation}
For $K_k\notin\calG_0$, one can show that $\fre_{n_j}\cap K_k$ shrinks to the edges of the gap, so since $g_\fre(\cdot,x_0)$ vanishes there, the integral goes to zero.  Using \eqref{3.21} and \eqref{3.9}, we see that
\begin{equation}\label{4.9}
  \lim_{j\to\infty} W_{n_j}(\fre,x_0) = 2\sum_{K_k\in\calG_0} g_\fre(x_k,x_0)
\end{equation}
It is then easy to see, knowing that all possible gap sets occur, that the set of limits is the entire interval $\bigl[2,2\exp[\PW(\fre,x_0)]\bigr]$.
\end{example}

\section{Szeg\H{o}--Widom Asymptotics} \lb{s5}

The purpose of this section is to obtain fairly explicit almost periodic asymptotics for $R_{x_0,n}$ as $n\to\infty$. Throughout this section, we assume that $\fre\subset\bbR$ is a regular Parreau--Widom set with DCT (discussed below) and let $\Omega$ be its complement in the Riemann sphere, that is,
\begin{equation} \lb{5.1}
  \Omega = (\hatC) \bs \fre
\end{equation}
Under these hypotheses, \cite{CSYZ2} proved explicit asymptotics for Chebyshev polynomials (earlier \cite{CSZ1} had proven this for finite gap sets).  Not only will our proof here have a lot in common with the proof in \cite{CSYZ2}, it will be able to use some parts of that proof verbatim.

Let $\wti{\Omega}$ be the universal cover of $\Omega$ with $\cz:\wti{\Omega}\mapsto\Omega$ the covering map.  We will be interested in analytic functions $f:\wti{\Omega}\mapsto\bbC$ so that there is a single-valued function $g:\Omega\mapsto\bbC$ with $g(\cz(w))=|f(w)|$.  Given such a function, by the monodromy theorem \cite[Theorem~11.2.1]{BCA}, if $\pi_1(\Omega,x_0)$ is the fundamental group, there is a map $\chi:\pi_1(\Omega,x_0)\mapsto\partial\bbD$ so that if $\gamma$ is a curve in $\wti{\Omega}$ with $\cz(\gamma(0))=\cz(\gamma(1))=x_0$, then $f(\gamma(1))=\chi([\cz\circ\gamma])f(\gamma(0))$, that is, $\chi$ describes the phase change under continuing the mutivalued projection of $f$ around a closed curve in $\Omega$.  It is easy to see that $\chi$ is a character.  We'll call $f$ a \emph{character automorphic function}, or a $\chi$-automorphic function when we want the character to be explicit.  By construction, $B_\fre$ normalized by $B_\fre(x_0)>0$ (or rather its  single-valued lift to $\wti{\Omega}$) is character automorphic.  We use $\chi_\fre$ for the associated character.

Given a character, $\chi$, of $\pi_1(\Omega,x_0)$, we define the \emph{Widom minimizer}, $F_\chi(z)$, as a bounded $\chi$-automorphic function with $F_\chi(x_0)=1$ and
\begin{equation} \lb{5.2}
  \norm{F_\chi}_\Omega = \inf\{\norm{h}_\Omega \,:\, h \in H^\infty(\Omega,\chi),\; h(x_0) = 1\}
\end{equation}
The \emph{dual Widom maximizer}, $Q_\chi$, is a $\chi$-automorphic function with $\norm{Q_\chi}_\Omega=1$ and
\begin{equation} \lb{5.3}
  Q_\chi(x_0) = \sup\{h(x_0): h\in H^\infty(\Omega,\chi), \; \norm{h}_\Omega = 1, \; h(x_0) > 0\}
\end{equation}
It is easy to see that
\begin{equation} \lb{5.4}
  Q_\chi = F_\chi/\norm{F_\chi}_\Omega, \quad F_\chi=Q_\chi/Q_\chi(x_0), \quad \norm{F_\chi}_\Omega = 1/Q_\chi(x_0)
\end{equation}

In the case $x_0=\infty$, these extremal functions, which we'll call $Q_\chi^\infty$, have been studied extensively.  If the PW property holds, for our finite $x_0$ situation, the dual Widom maximizer $Q_\chi^\infty$ exists and is unique; see, for example, \cite{CSYZ2}.  We will use $F_n$ as shorthand for $F_{\chi_\fre^n}$.

A final element we need is the notion of the Direct Cauchy Theorem (DCT) property.  There are many equivalent definitions of DCT -- see Hasumi \cite[pg. 151]{Hasu} or Volberg--Yuditskii \cite{VY}.  Rather than stating a formal definition, we quote a theorem that could be used as one definition of DCT: $\fre$ has the DCT property if and only if $Q_\chi^\infty(\infty)$ depends continuously on $\chi$.

We note that any homogeneous subset of $\bbR$ (in the sense of Carleson) obeys DCT \cite{SY}.  On the other hand, Hasumi \cite{Hasu} has found rather simple explicit examples (with thin components) of subsets of $\bbR$ which obey PW but not DCT.  Volberg--Yuditskii \cite{VY} have even found examples all of whose reflectionless measures are absolutely continuous.

By a conformal transformation, $\Upsilon$, the set and any fixed point $x_0\in\bbR\bs\fre$ can be mapped to $\frf=\Upsilon[\fre]$ and $\infty=\Upsilon(x_0)$. By \cite[pg. 177]{Hasu}, $\frf$ has DCT if and only if $\fre$ does.  It follows that for our problem of finite $x_0$ and $Q_\chi$, one has first that $\chi\mapsto Q_\chi(x_0)$ is continuous and then, as in \cite{CSYZ2}, that for each $z\in\wti{\Omega}$, $Q_\chi(z)$ and $F_\chi(z)$ as well as $\norm{F_\chi}$ are continuous in $\chi$.  Thus $F_n(z)$ and $\norm{F_n}$ are almost periodic functions of $n$.

With this background, we can turn to the proof of Theorem~\ref{T1.1}.  We note it is easy to see, as mentioned earlier, that uniform convergence on compact subsets of $\Omega$ with cuts implies uniform convergence on compact subsets of $\wti{\Omega}$.

As in \cite{CSYZ2}, we begin by discussing the function defined on $\Omega_n\equiv(\hatC)\bs\fre_n$ by
\begin{equation} \label{5.5}
   L_n(z) \equiv B_\fre(z)^n\Delta_n(z)=B_\fre(z)^n B_n(z)^{d_n}+B_\fre(z)^n/B_n(z)^{d_n}
\end{equation}
Since $|B_\fre|<1$ on $\Omega$, it suffices to consider the asymptotics of
\begin{equation} \lb{5.6}
   M_n(z) \equiv B_\fre(z)^n/B_n(z)^{d_n}
\end{equation}
which by \eqref{3.12} is bounded in magnitude by $1$ on $\Omega_n$.

To control the convergence to an almost periodic orbit, we will control limits along enough subsequences.  The complement of $\fre$ in $\hatR$ is a disjoint union of bounded open components and an infinite component which is $(x_+,\infty)\cup\{\infty\}\cup(-\infty,x_-)$.  We'll call these components the gaps and denote the set of gaps by $\calG$.  A \emph{gap collection} is a subset $\calG_0 \subset \calG$.  A \emph{gap set} is a gap collection, $\calG_0$, and for each $K_k \in \calG_0$, a point $x_k \in K_k$.  For any bounded gap $K = (\alpha,\beta)$, we define
\begin{equation} \lb{5.7}
  K^{(\eps)} = \left(\alpha+\eps|\tfrac{\beta-\alpha}{2}|,\beta-\eps|\tfrac{\beta-\alpha}{2}|\right),   \quad \eps \in (0,1)
\end{equation}
and, for the unbounded gap, $K^{(\eps)} = (x_+ +\eps,\infty)\cup\{\infty\}\cup(-\infty,x_- -\eps)$.

For each gap set $S$, we define the associated Blaschke product
\begin{equation} \lb{5.8}
  B_S(z) = \prod_{K_k \in \calG_0} B_\fre(z,x_k)
\end{equation}
where we normalize all the Blaschke functions by $B_\fre(x_0,x_k)>0$.
If $\fre$ is DCT then, by \cite{CSYZ2}, we know that each such $B_S$ is a dual Widom maximizer $Q_\chi$. If $x_k\in K_k$ and $c_k$ is the critical point of $g_\fre(\cdot,x_0)$ in $K_k$, then $|B_\fre(x_0,x_k)|=\exp(-g_\fre(x_0,x_k))=\exp(-g_\fre(x_k,x_0)\ge\exp(-g_\fre(c_j,x_0))$, so we have that $Q_\chi(x_0)=B_S(x_0)$ is bounded away from zero uniformly in $S$,
\begin{equation} \label{5.9}
  |B_S(x_0)|\ge\exp[-\PW(\fre,x_0)]
\end{equation}
with equality occurring when the gap set $S$ consists of all the gaps $\calG$ and in each gap $K_k\in\calG$ the point $x_k$ is the critical point of $g_\fre(\dott,x_0)$.

In the next theorem we will think of $R_{x_0,n}$ with $d_n=n-1$ as a degenerate polynomial of degree $n$ with a zero at infinity.

\begin{theorem} \lb{T5.1}
Let $n_j \to \infty$ so that for some gap set $S$ we have that if $K_k \in \calG_0$, then for large $j$, $R_{x_0,n_j}(z)$ has a zero $x_j^{(k)}$ in $K_k$ which converges to $x_k$ as $j \to \infty$ and so that for any $K\in\calG\bs\calG_0$, and for all $\eps\in(0,1)$, $R_{x_0,n_j}(z)$ has no zero in $K^{(\eps)}$ for all large $j$.  Then, as $j\to\infty$, $M_{n_j}(z) \to B_S(z)$ uniformly on compact subsets of $\Omega\bs\{x_k\}_{K_k\in\calG_0}$.
\end{theorem}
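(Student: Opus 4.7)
The plan is to show $\{M_{n_j}\}$ is normal, compute $\lim|M_{n_j}|$ via potential theory, and match phases to identify the limit as $B_S$.

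First, I would establish that every compact $K\subset\Omega\bs\{x_k\}_{K_k\in\calG_0}$ lies in $\Omega_{n_j}$ for $j$ large, with $|M_{n_j}|\le 1$ there. The bound $|M_n|=\exp(-ng_\fre+d_ng_n)\le 1$ on $\Omega_n$ follows from $n\ge d_n$ and $g_\fre\ge g_n$ (since $\fre\subset\fre_n$). For $K_k\in\calG_0$, the hypothesis on zeros together with the first remark after Theorem~\ref{T3.10} forces the band $I_{k,n_j}=\fre_{n_j}\cap K_k$ to collapse to $\{x_k\}$. For $K_k\notin\calG_0$, the hypothesis combined with Proposition~\ref{P3.7}~(d) and Remark~2 after Theorem~\ref{T3.11} forces $I_{k,n_j}$ to collapse to $\partial K_k\subset\fre$. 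Hence $K\cap\fre_{n_j}=\emptyset$ for large $j$.

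Second, for the pointwise limit of $|M_{n_j}|$, I would generalize the Dirichlet argument used in the proof of Theorem~\ref{T3.10}: the function $w\mapsto g_\fre(w,z)-g_{n_j}(w,z)$ is harmonic on $\hatC\bs\fre_{n_j}$ with boundary values $0$ on $\fre$ and $g_\fre(x,z)$ on $\fre_{n_j}\bs\fre$, yielding
\begin{equation*}
  -\log|M_{n_j}(z)| = (n_j-d_{n_j})g_\fre(z) + d_{n_j}\!\int_{\fre_{n_j}\bs\fre} g_\fre(x,z)\,d\rho_{n_j}(x)
\end{equation*}
for each $z\in\Omega$. Using $d_{n_j}\rho_{n_j}(I_{k,n_j})=1$ (Proposition~\ref{P3.7}~(a)) and the band collapse, bands in gaps $K_k\in\calG_0$ (other than the unbounded gap with $x_k=\infty$) each contribute $g_\fre(x_k,z)$, bands in gaps $K_k\notin\calG_0$ contribute $0$, and the prefactor $(n_j-d_{n_j})g_\fre(z)$ provides exactly $g_\fre(z)=g_\fre(\infty,z)$ when the unbounded gap is in $\calG_0$ with $x_k=\infty$ (for then $d_{n_j}=n_j-1$ and, by Proposition~\ref{P3.7}~(b), the unbounded gap carries no band). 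The total is $\sum_{K_k\in\calG_0}g_\fre(x_k,z)=-\log|B_S(z)|$ by symmetry of the Green's function.

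Third, I would pass to subsequential limits on the universal cover $\wti\Omega$ via normal families: any subsequence of $\{M_{n_j}\}$ has a sub-subsequence converging uniformly on compact subsets of $\Omega\bs\{x_k\}_{K_k\in\calG_0}$ to some analytic $M$ with $|M|=|B_S|$. Since $B_S$ has only simple zeros at each $x_k$, the isolated singularities of $M$ there are removable and $M$ inherits matching simple zeros, so $M/B_S$ extends to an analytic function on $\wti\Omega$ with $|M/B_S|\equiv 1$, hence a unimodular constant. The normalization $M_{n_j}(x_0)=B_\fre(x_0)^{n_j}/B_{n_j}(x_0)^{d_{n_j}}>0$ combined with $B_S(x_0)>0$ forces this constant to be $1$, so $M=B_S$. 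Uniqueness of the subsequential limit gives convergence of the full sequence. The main difficulty lies in verifying the band-collapse claims carefully and tracking the character-automorphic structure on $\wti\Omega$; the latter is resolved because $|M_{n_j}|$ is single-valued on $\Omega$ and therefore determines $M$ up to a single unimodular constant that can be pinned down at $x_0$.
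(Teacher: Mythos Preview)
Your overall strategy---normal families, a Dirichlet--harmonic-measure computation of $\lim|M_{n_j}|$, and phase-matching at $x_0$---is exactly the argument that the paper inherits from \cite{CSYZ2}, so the approaches coincide.  There is, however, a genuine gap in your treatment of the unbounded gap.

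You write that when the unbounded gap $K_0$ lies in $\calG_0$ with $x_k=\infty$, ``then $d_{n_j}=n_j-1$ and, by Proposition~\ref{P3.7}~(b), the unbounded gap carries no band.''  This is not guaranteed by the hypothesis: the ``zero at infinity'' convention allows $x_j^{(0)}\to\infty$ with $x_j^{(0)}$ \emph{finite} for every $j$, i.e.\ $d_{n_j}=n_j$.  In that case your prefactor $(n_j-d_{n_j})g_\fre(z)$ vanishes and you must extract the contribution $g_\fre(\infty,z)=g_\fre(z)$ from the integral term instead.  For that you need the band $I_{0,n_j}=\fre_{n_j}\cap K_0$ to drift to infinity, and the paper explicitly warns that this band may fail to shrink in Euclidean size, so the ``first remark after Theorem~\ref{T3.10}'' that you invoke does not apply.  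The paper closes this gap in its subcase~(2) by appealing to the root asymptotics of Theorem~\ref{T2.4}: on any compact subset of $K_0$ one eventually has $|R_{x_0,n_j}|>\norm{R_{x_0,n_j}}_\fre$, so the band's endpoints tend to $\infty$ even if its length does not go to zero.  Equivalently, one can conformally move $x_0$ to $\infty$ and quote \cite{CSYZ2} directly.  Once the band is known to escape every compact set, your Step~1 (eventual containment $K\subset\Omega_{n_j}$) is restored, and in Step~2 the integrand $g_\fre(x,z)\to g_\fre(\infty,z)$ uniformly over the band while $d_{n_j}\rho_{n_j}(I_{0,n_j})=1$, giving the required contribution.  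With this correction your argument is complete.
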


\begin{proof} This result is similar to \cite[Theorem~4.1]{CSYZ2}.  The argument given in \cite[Section~4]{CSYZ2} needs only a slight modification when the gap set $S$ contains the infinite gap $K_0=(x_+,\infty)\cup\{\infty\}\cup(-\infty,x_-)$.  To deal with the infinite gap, we need to consider two subcases:

 (1) $d_{n_j}=n_j-1$. In this case, $R_{x_0,n_j}$ is a constant multiple of the Chebyshev polynomial $T_{n_j-1}$. Hence, by \cite[Theorem~4.1]{CSYZ2},
\begin{equation} \lb{5.10}
  [B_\fre/B_{n_j}]^{n_j-1} \to B_{S'}
\end{equation}
where $S'$ is the gap set $S$ with the infinite gap removed, and so
\begin{equation} \lb{5.11}
  M_{n_j}=B_\fre^{n_j}/B_{n_j}^{n_j-1} = B_\fre[B_\fre/B_{n_j}]^{n_j-1} \to B_\fre B_{S'}=B_{S}
\end{equation}

(2) $d_{n_j}=n_j$. In this case, it is possible that the band $\fre_{n_j}\cap K_0$ will not shrink in size as $j\to\infty$. However, as we shall explain, its endpoints must still converge to $x_0=\infty$. We know that $x_j^{(0)}\to x_0=\infty$ and, by \eqref{2.7} and \eqref{2.8},
\begin{equation} \lb{5.12}
  \left|\frac{R_{x_0,n}(z)}{\norm{R_{x_0,n}}_\fre}\right|^{1/n} \to \exp[g_\fre(z)]
\end{equation}
uniformly on compact sets not containing zeros of $R_{x_0,n}(z)$.  Therefore, on each compact subset of $K_0$, we have $|R_{x_0,n}|>\norm{R_{x_0,n}}_\fre$ for large $n$. This implies that the endpoints of $\fre_{n_j}\cap K_0$ converge to $x_0=\infty$ and that is what we need for the argument of \cite[Section~4]{CSYZ2}. Alternatively, we could use the conformal transformation $z\mapsto f(z)=(z-x_0)^{-1}$ to define the sets $\frf=f(\fre)$ and $\frf_n=f(\fre_n)$. Then for each gap $f(K_k)$ of $\frf$, the band $\frf_{n_j}\cap f(K_k)$ shrinks to $f(x_k)$ which is the setting of \cite[Section~4]{CSYZ2} and hence the result follows from \cite[Section~4]{CSYZ2}.
\end{proof}

\begin{proof} [Proof of Theorem~\ref{T1.1}]  By \eqref{5.5}--\eqref{5.6}, our previous remark that each $B_S$ is a dual Widom maximizer, and the previous theorem, for all $z\in\Omega$,
\begin{equation} \label{5.13}
  \lim_{n\to\infty} L_n(z)-Q_n(z) = \lim_{n\to\infty} M_n(z)-Q_n(z) = 0
\end{equation}
At $z=x_0$, this yields
\begin{equation}
  \lim_{n\to\infty} L_n(x_0)/Q_n(x_0) = 1
\end{equation}
since $1/Q_n(x_0)$ is bounded above by \eqref{5.9}.  Recalling \eqref{5.5} and that $B_\fre(x_0)=\exp(-g_\fre(x_0))$ and, by \eqref{3.5}, $\Delta_n(x_0) = 2/\norm{R_{x_0,n}}_\fre$ then shows
\begin{equation} \label{5.14}
  \lim_{n\to\infty} e^{n g_\fre(x_0)} \norm{R_{x_0,n}}_\fre Q_n(x_0) = 2
\end{equation}
This implies \eqref{1.7} since the sequence $\norm{F_n}_\Omega=1/Q_n(x_0)$ is bounded above by \eqref{5.9}.

By \eqref{5.5} and \eqref{3.5},
\begin{equation} \lb{5.15}
  e^{n g_\fre(x_0)} B_\fre(z)^n R_{x_0,n}(z)   = \tfrac12 e^{n g_\fre(x_0)} \norm{R_{x_0,n}}_\fre L_n(z)
\end{equation}
Since, by \eqref{5.4}, $Q_n(z)=Q_n(x_0)F_n(z)$ and $e^{n g_\fre(x_0)} \norm{R_{x_0,n}}_\fre$ is a bounded sequence by \eqref{1.7}, it follows from \eqref{5.13} and \eqref{5.14} that
\begin{align}
  0 &= \lim_{n\to\infty} \tfrac12 e^{n g_\fre(x_0)} \norm{R_{x_0,n}}_\fre  \left[ L_n(z)-Q_n(z) \right] \nonumber \\
    & = \lim_{n\to\infty} \left[e^{n g_\fre(x_0)} B_\fre(z)^n R_{x_0,n}(z) - \tfrac12 e^{n g_\fre(x_0)} \norm{R_{x_0,n}}_\fre Q_n(x_0) F_n(z) \right] \nonumber \\
    & = \lim_{n\to\infty} \left[e^{n g_\fre(x_0)} B_\fre(z)^n R_{x_0,n}(z) - F_n(z) \right] \lb{5.16}
\end{align}
which is \eqref{1.8}.
\end{proof}

In \cite{Tot11} and \cite{Tot14}, Totik studied the $\liminf$ of Widom factors for Chebyshev polynomials and when the limit exists.  We want to show it is easy to prove and extend (both to residual polynomials and in the case of Theorem \ref{T5.2} to a larger family of sets) these results using the ideas of this section.

\begin{theorem} \lb{T5.2} Let $\fre\subset\bbR$ be a compact PW set with DCT and let $x_0\in\bbR\bs\fre$.  Then $\liminf_{n\to\infty} W_n(\fre,x_0)=2$.  This holds also for $x_0=\infty$ if $W_n(\fre,\infty)$ is interpreted as $t_n/C(\fre)^n$.
\end{theorem}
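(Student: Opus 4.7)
The plan is to reduce the question to almost-periodic behavior of the character $\chi_\fre^n$ in the (compact) character group, and then exploit Poincar\'e-type recurrence. By Remark~2 after Theorem~\ref{T1.1} we have $\lim_{n\to\infty}[W_n(\fre,x_0)-2\norm{F_n}_\Omega]=0$, hence
\[
  \liminf_{n\to\infty} W_n(\fre,x_0) \;=\; 2\,\liminf_{n\to\infty}\norm{F_n}_\Omega.
\]
Since $F_n$ is $\chi_\fre^n$-automorphic with $F_n(x_0)=1$, we already know $\norm{F_n}_\Omega\ge 1$, matching the Schiefermayr lower bound $W_n\ge 2$ from Corollary~\ref{C3.9}. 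So everything reduces to showing $\liminf_{n\to\infty}\norm{F_n}_\Omega\le 1$.

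First I would recall the key consequence of DCT established in Section~\ref{s5}: the map $\chi\mapsto F_\chi(z)$ (and $\chi\mapsto\norm{F_\chi}_\Omega$) is continuous on the character group of $\pi_1(\Omega,x_0)$ equipped with its natural compact topology. Next, when $\chi=\mathbf{1}$ is the trivial character, the only competitor in \eqref{5.2} that one cannot beat is $h\equiv 1$, so $F_{\mathbf{1}}\equiv 1$ and $\norm{F_{\mathbf{1}}}_\Omega=1$. Thus it suffices to produce a subsequence $n_j\to\infty$ with $\chi_\fre^{n_j}\to\mathbf{1}$ in the character group.

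The character group $G$ is a compact abelian topological group (it is the Pontryagin dual of the discrete group $\pi_1(\Omega,x_0)^{\text{ab}}$). Hence the sequence $\{\chi_\fre^n\}_{n\ge 1}\subset G$ has a convergent subsequence $\chi_\fre^{n_k}\to\eta$. Setting $m_k:=n_{k+1}-n_k\to\infty$, the continuity of the group operations gives $\chi_\fre^{m_k}=\chi_\fre^{n_{k+1}}\cdot(\chi_\fre^{n_k})^{-1}\to \eta\eta^{-1}=\mathbf{1}$. By the continuity established above,
\[
  \norm{F_{m_k}}_\Omega \;\longrightarrow\; \norm{F_{\mathbf{1}}}_\Omega \;=\; 1,
\]
so $\liminf_n \norm{F_n}_\Omega\le 1$, and combined with the trivial lower bound $\norm{F_n}_\Omega\ge 1$ we conclude $\liminf_n\norm{F_n}_\Omega=1$, giving $\liminf_n W_n(\fre,x_0)=2$.

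For the final sentence (the case $x_0=\infty$ with $W_n$ interpreted as $t_n/C(\fre)^n$), the same machinery applies verbatim: \cite{CSYZ2} provides the Szeg\H{o}--Widom asymptotics $\lim[t_n/C(\fre)^n-2\norm{F_n^{\infty}}_\Omega]=0$ for the Chebyshev problem at infinity under PW+DCT, and then the character-recurrence argument above produces $n_j\to\infty$ with $\chi_\fre^{n_j}\to\mathbf{1}$ (the character group is the same in both pictures once one moves the base point by a conformal transformation as in the paragraph preceding Theorem~\ref{T1.1}), yielding $\liminf t_n/C(\fre)^n=2$. The only substantive step here, and the one that rests on genuine analysis, is the continuity of $\chi\mapsto\norm{F_\chi}_\Omega$; that is precisely the content of DCT as used in Section~\ref{s5}, so no new work is required, and the rest is a short recurrence argument in a compact abelian group.
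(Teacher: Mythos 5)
Your proposal is correct and follows essentially the same route as the paper: reduce via Remark~2 after Theorem~\ref{T1.1} to showing $\liminf\norm{F_n}_\Omega=1$, observe that $F_{\mathbf 1}\equiv 1$ and that $\chi\mapsto\norm{F_\chi}_\Omega$ is continuous by DCT, and then invoke recurrence of $\chi_\fre^n$ to the identity in the compact character group. The paper simply cites this last recurrence fact as ``well known,'' while you sketch its proof; the one small slip is the assertion that $m_k:=n_{k+1}-n_k\to\infty$ automatically, which need not hold (e.g.\ if $\chi_\fre$ has finite order and $n_k$ are equally spaced) --- the fix is to pass to a further subsequence with $n_{k+1}>2n_k$ so that $m_k>n_k\to\infty$, after which the rest of your argument goes through verbatim.
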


\begin{remark} Totik \cite[Theorem~3]{Tot11} has this result for finite gap sets in the Chebyshev case; indeed, he has some control on the rate of convergence of $\inf_{j\le n} W_{j}(\fre,\infty)$ to $2$.
\end{remark}

\begin{proof} We consider the case $x_0\ne\infty$ (the Chebyshev case follows using the arguments in \cite{CSYZ2, CSZ3} in place of the ideas of this section).  If $G$ is a finite or infinite dimensional torus, it is easy to see and well known that for any $g\in G$, there is $n_j$ so that $g^{n_j}$ goes to the identity as $j\to\infty$.  Applied to the character group and $\chi_\fre$, we find $n_j$ so that $\chi_\fre^{n_j}\to 1$.  Thus $F_{n_j}\to F_1\equiv 1$, so by continuity of $\chi\mapsto\norm{F_\chi}$ and Remark~2 after Theorem~\ref{T1.1}, $W_{n_j}(\fre,x_0)$ converges to $2$.  Given the lower bound \eqref{3.11}, we get the result.
\end{proof}

\begin{theorem} \lb{T5.3} Let $\fre\subset\bbR$ be a compact PW set with DCT and let $x_0\in\bbR\bs\fre$.  Then $W_n(\fre,x_0)$ has a limit as $n\to\infty$ if and only if $\fre$ is a single interval and, in that case, the limit is $2$.
\end{theorem}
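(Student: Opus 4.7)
The \emph{if} direction should be immediate from Corollary~\ref{C3.9}: when $\fre=[a,b]$ is a single interval, $\fre$ has one component of $\fre$-harmonic measure $1$, trivially an integer multiple of $1/n$ for every $n$, so equality holds in \eqref{3.10} and $W_n(\fre,x_0)=2$ for all $n$. Hence the limit exists and equals $2$.

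For the \emph{only if} direction, my plan is to reduce the question to the almost periodic behavior of $\norm{F_n}_\Omega$ via Theorem~\ref{T1.1}. By \eqref{1.7}, $W_n(\fre,x_0)$ converges iff $\norm{F_n}_\Omega$ does, and Theorem~\ref{T5.2} forces any such limit to be $2$, equivalently $\norm{F_n}_\Omega\to 1$. So it will suffice to produce a subsequence $n_k\to\infty$ along which $\norm{F_{n_k}}_\Omega$ stays bounded away from $1$, assuming $\fre$ is not a single interval. The key observation I will use is that $\norm{F_\chi}_\Omega=1$ forces $\chi=1$: indeed $\norm{F_\chi}_\Omega\ge|F_\chi(x_0)|=1$, and the maximum principle applied to the single-valued function $|F_\chi|$ on $\Omega$ upgrades equality to $F_\chi\equiv 1$, whence $\chi=1$.

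With this reduction, I would work in the compact character group $G$ of $\pi_1(\Omega,x_0)$. First, $\fre$ not being a single interval makes $\Omega$ multiply connected and forces $\chi_\fre\ne 1$, because the period of the harmonic conjugate of $g_\fre$ around any bounded component $K$ of $\fre$ equals $2\pi\rho_\fre(K)\in 2\pi(0,1)$. Then I claim the sequence $(\chi_\fre^n)_{n\ge 1}$ cannot converge to the identity in $G$: were it to, continuity of multiplication in $G$ would give both $\chi_\fre^{n+1}=\chi_\fre\cdot\chi_\fre^n\to\chi_\fre$ and $\chi_\fre^{n+1}\to 1$, contradicting $\chi_\fre\ne 1$. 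So $(\chi_\fre^n)$ has an accumulation point $\chi^*\ne 1$ along some subsequence $n_k\to\infty$; by the DCT-based continuity of $\chi\mapsto\norm{F_\chi}_\Omega$ recalled in Section~\ref{s5}, this yields $\norm{F_{n_k}}_\Omega\to\norm{F_{\chi^*}}_\Omega>1$. Combined with the subsequence produced by Theorem~\ref{T5.2} along which $\norm{F_{n_j}}_\Omega\to 1$, the sequence $\norm{F_n}_\Omega$ has two distinct accumulation points, so it does not converge and neither does $W_n(\fre,x_0)$.

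The only step that is not purely bookkeeping is the verification that $\chi_\fre$ is non-trivial whenever $\fre$ has more than one component; this is a standard potential-theoretic fact built into the very construction of $B_\fre$ in Section~\ref{s1}. Everything else follows cleanly from Theorems~\ref{T1.1} and~\ref{T5.2} together with the elementary observation about powers of a non-identity element in a compact group.
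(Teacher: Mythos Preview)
Your proof is correct and follows essentially the same route as the paper's: both combine Theorem~\ref{T1.1} with Theorem~\ref{T5.2}, use the maximum principle to conclude that $\norm{F_\chi}_\Omega=1$ forces $\chi=1$, and invoke the elementary compact-group fact that the only way all limit points of $(\chi_\fre^n)$ can equal the identity is if $\chi_\fre$ itself is trivial, which in turn forces $\fre$ to be an interval via harmonic measure. The only cosmetic difference is that you argue by contraposition (produce a subsequence with $\norm{F_{n_k}}_\Omega>1$) whereas the paper argues directly, and you handle the \emph{if} direction via Corollary~\ref{C3.9} rather than leaving it implicit.
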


\begin{remark} For Chebyshev polynomials, this is a result of Totik \cite[Theorem~3]{Tot14}.  Indeed, he doesn't need the PW and DCT conditions. 
\end{remark}

\begin{proof} By Theorem~\ref{T5.2}, if the limit exists, it is $2$.  By Theorem~\ref{T1.1}, if $\chi$ is any limit point of $\chi_\fre^n$, we must have $\norm{F_\chi}=1$.  By the maximum principle, this can only happen if $\chi=1$.  It is easy to see that if $g$ is an element of a torus and the only limit point of $g^n$ is the identity, then $g$ is the identity.  Thus $\chi_\fre$ is $1$.  But the phase change of a simple closed curve in $\bbC\bs\fre$ enclosing a component of $\fre$ is the harmonic measure of the component within and if that is always $0$ or $1$, we have that $\fre$ has no gaps, i.e., is an interval.
\end{proof}

It is natural to ask if there is a similar universal result on the upper bound, that is, if the $\limsup$ always saturates \eqref{3.15}.  As explained in Example~\ref{E4.8}, if the orbit of $\chi_\fre$ is dense, then $\limsup$ saturates \eqref{3.15}.  However, for non-connected period-$n$ sets $\fre\subset\bbR$ there are values of $x_0\in\bbR\bs\fre$ such that the $\limsup$ does not saturate \eqref{3.15}.  Indeed, for such period-$n$ sets there are only finitely many limit point gap sets and the $\limsup$ saturates \eqref{3.15} only when one of the limit point gap sets contains the critical points of $g_\fre(\cdot,x_0)$ in each gap.  As we'll show below, the critical points of $g_\fre(\cdot,x_0)$ are not constant in $x_0\in I$ for any interval $I\subset\bbR\bs\fre$ and hence the $\limsup$ does not saturate \eqref{3.15} for $x_0$ in a dense subset of $\bbR\bs\fre$.

By contradiction, suppose $c$ is a critical point of $g_\fre(\dott,x_0)$ for all $x_0\in I$. Let $h(z)=\pd_t g_\fre(t,z)|_{t=c}$.  Since $g_\fre(t,\dott)=g_\fre(\dott,t)$ has a logarithmic pole at $t$, it follows that $h$ is a non-constant harmonic function on $\bbC\bs(\fre\cup\{c\})$. By assumption, $h\equiv 0$ on $I$ and hence $\pd_x h=0$ on $I$.  The symmetry $g_\fre(z,t)=g_\fre(\bar z,t)$ implies that
$$
\pd_y g_\fre(x+iy,t)|_{y=0}=0 \; \mbox{ for all } \; t,x\in\bbR\bs\fre, \; t\neq x.
$$
Therefore, $\pd_t\pd_y g_\fre(x+iy,t)|_{y=0}=0$ and so $\pd_y h=0$ on $\bbR\bs(\fre\cup\{c\})$.  In conclusion, we see that $\nabla h=0$ on an interval $I$ and since $h$ is harmonic, it follows that $h$ must be identically constant, a contradiction.

\bigskip\noindent{\bf Acknowledgments.}
We would like to thank M.\ Ismail, D.\  Lubinsky, and K.\ Schiefermayr for useful comments.


\end{document}